\documentclass[11pt,a4paper]{amsart}
\usepackage{amsmath,amsthm,amsfonts,graphicx,calc,float,fullpage}
\usepackage[numbers,sort&compress]{natbib}
\renewcommand{\baselinestretch}{1.12}
\setlength{\footnotesep}{\baselinestretch\footnotesep}
\usepackage[colorlinks=true,citecolor=black,linkcolor=black]{hyperref}
\newcommand{\doi}[1]{\href{http://dx.doi.org/#1}{\texttt{doi:#1}}}

\newcommand{\urlprefix}{}

\theoremstyle{plain}
\newtheorem{theorem}{Theorem}
\newtheorem{lemma}[theorem]{Lemma}
\newtheorem{corollary}[theorem]{Corollary}

\begin{document}

\title{Rooted $K_4$-Minors}

\author{Ruy Fabila-Monroy} \address{\newline Departamento de
  Matem\'aticas \newline Cinvestav \newline Distrito Federal,
  M\'exico} \email{ruyfabila@math.cinvestav.edu.mx}

\author{David~R.~Wood} \address{\newline Department of Mathematics and
  Statistics \newline The University of Melbourne \newline Melbourne,
  Australia} \email{woodd@unimelb.edu.au}

\thanks{R.F.-M. is supported by an Endeavour Fellowship from the
  Department of Education, Employment and Workplace Relations of the
  Australian Government. D.W. is supported by a QEII Research
  Fellowship from the Australian Research Council.}

\subjclass[2000]{graph minors 05C83}

\date{\today}

\begin{abstract}
  Let $a,b,c,d$ be four vertices in a graph $G$. A \emph{$K_4$-minor
    rooted} at $a,b,c,d$ consists of four pairwise-disjoint
  pairwise-adjacent connected subgraphs of $G$, respectively containing
  $a,b,c,d$. We characterise precisely when $G$ contains a $K_4$-minor
  rooted at $a,b,c,d$ by describing six classes of obstructions, which
  are the edge-maximal graphs containing no $K_4$-minor rooted at
  $a,b,c,d$.  The following two special cases illustrate the full
  characterisation: (1) A 4-connected non-planar graph contains a
  $K_4$-minor rooted at $a,b,c,d$ for every choice of $a,b,c,d$.  (2)
  A 3-connected planar graph contains a $K_4$-minor rooted at
  $a,b,c,d$ if and only if $a,b,c,d$ are not on a single face.
\end{abstract}

\maketitle

\section{Introduction}

Let $G$ and $H$ be graphs\footnote{We consider finite, simple,
  undirected graphs.}. An \emph{$H$-minor}\footnote{This definition of
  minor is a more concrete version of the standard definition: $H$ is a
  \emph{minor} of $G$ if $H$ is isomorphic to a graph obtained from a
  subgraph of $G$ by contracting edges.}  in $G$ is a set $\{G_x:x\in
V(H)\}$ of pairwise disjoint connected subgraphs of $G$ indexed by the
vertices of $H$, such that if $xy\in E(H)$ then some vertex in $G_x$
is adjacent to some vertex in $G_y$. Each subgraph $G_x$ is called a
\emph{branch set} of the minor. A complete graph $K_t$-minor in $G$ is
\emph{rooted} at distinct vertices $v_1,\dots,v_t\in V(G)$ if
$v_1,\dots,v_t$ are in distinct branch sets. For brevity, we say that a
$K_t$-minor rooted at $\{v_1,\dots,v_t\}$ is a
\emph{$\{v_1,\dots,v_t\}$-minor}. Rooted minors are a significant tool in
Robertson and Seymour's graph minor theory \citep{RS-GraphMinors}, and
a number of recent papers have studied rooted minors in their own
right \citep{JorgKawa-JGT07,Kawa-DM04,Wollan-JGT08,LinusWood}.  Rooted
minors are analogous to $H$-linked graphs for subdivisions; see
\citep{KostYu-DAM08,GKY-SJDM06,KostYu-JGT05}. This paper considers the
question:

\medskip{\it When does a given graph contain a $K_4$-minor rooted at
  four nominated vertices?}

\medskip Theorem~\ref{thm:Main} answers this
question by describing six classes of obstructions, which are the
edge-maximal graphs containing no $K_4$-minor rooted at four nominated
vertices. The flavour of this result is best introduced by first
considering the 3- and 4-connected cases, which are addressed in
Sections~\ref{sec:4Conn} and \ref{sec:3Conn}. First, we survey some
definitions and results from the literature that will be employed
later in the paper.

%%%%%%%%%%%%%%%%%%%%%%%%%%
\section{Background}

The question of when does a graph contain a $K_3$-minor rooted at
three nominated vertices was answered by \citet{LinusWood}.

\begin{lemma}[\citep{LinusWood}]
  \label{lem:K3}
  For distinct vertices $a,b,c$ in a graph $G$, either:
  \begin{itemize}
  \item $G$ contains an $\{a,b,c\}$-minor, or
  \item for some vertex $v\in V(G)$ at most one of $a,b,c$ are in each
    component of $G-v$.
  \end{itemize}
\end{lemma}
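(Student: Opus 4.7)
The plan is to handle the two implications separately. The easy direction—that a separating vertex $v$ forbids an $\{a,b,c\}$-minor—is by contradiction: in any putative branch sets $A,B,C$, at most one contains $v$, so two (say $A,B$) are disjoint from $v$; connectedness confines $A$ to the component of $G-v$ containing $a$ and $B$ to the distinct component containing $b$, so no $A$-$B$ edge exists, contradicting the minor definition.

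For the converse I would prove the contrapositive by induction on $|V(G)|$. If $G$ is not 2-connected, pick a cut vertex $v$ and let $D_1,\ldots,D_k$ be the components of $G-v$. If each $D_i$ contains at most one element of $\{a,b,c\}\setminus\{v\}$, then $v$ itself separates $\{a,b,c\}$ and we are done. Otherwise some $D_i$ contains two of the roots—say $a$ and $b$—with the third root on the other side of $v$. Apply induction to the strictly smaller subgraph $G_1=G[\{v\}\cup V(D_i)]$, with root set $\{a,b,v\}$ if $v\notin\{a,b,c\}$ and $\{a,b,c\}$ if $v=c$. The inductive conclusion gives either a rooted $K_3$-minor of $G_1$ that extends to an $\{a,b,c\}$-minor of $G$ by enlarging the branch set containing $v$ along a path from $v$ to $c$ through another $D_j$, or a separator in $G_1$, which one verifies also separates $\{a,b,c\}$ in $G$.

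The substantive case is 2-connected $G$. By a classical theorem of Whitney there is a cycle $K$ through $a$ and $b$; if $c\in V(K)$, the three open arcs of $K$ bounded by $a,b,c$ form the branch sets directly. Otherwise, the fan lemma supplies two internally disjoint paths $R_1,R_2$ from $c$ to $V(K)$ ending at distinct vertices $x,y\in V(K)$. The four points $\{a,b,x,y\}$ cut $K$ into at most four arcs, and in each cyclic arrangement of these vertices there is an explicit assignment yielding the minor. For instance, in cyclic order $a,x,b,y$ take $A$ to be the arc from $y$ through $a$ to (but excluding) $x$, take $B$ to be the arc from $x$ through $b$ to (but excluding) $y$, and take $C=\{c\}\cup(R_1\cup R_2)\setminus\{x,y\}$; then verify connectedness, disjointness, and the three pairwise adjacencies (supplied by edges of $K$ incident to $x$ and $y$, and by the terminal edges of $R_1,R_2$ at $c$).

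The main obstacle is the bookkeeping in this 2-connected step: every cyclic order of $\{a,b,x,y\}$ and every degenerate configuration—empty arc interior, $x$ or $y$ coinciding with $a$ or $b$, trivial $R_i$—must be checked to ensure the three branch sets are nonempty, connected, pairwise disjoint, and pairwise adjacent. These verifications are direct inspection but constitute the real content of the argument.
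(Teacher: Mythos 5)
The paper does not prove Lemma~\ref{lem:K3} at all; it is imported verbatim from \citep{LinusWood} with a citation, so there is no in-paper argument to compare against. Judged on its own, your proof is essentially correct and is the natural argument: the easy direction via confinement of two branch sets to distinct components of $G-v$ is right (and correctly accommodates the possibility $v\in\{a,b,c\}$), the reduction across a cut vertex is sound, and the 2-connected core (cycle through $a,b$, fan of two paths from $c$ landing at $x,y$, then splitting the cycle into an arc containing $a$ and one of $x,y$ and a complementary arc containing $b$ and the other) does succeed in every cyclic arrangement of $a,b,x,y$, including the degenerate cases $x\in\{a,b\}$ or $y\in\{a,b\}$; I checked that the deferred bookkeeping goes through.

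Three small patches are needed before this is complete. First, your dichotomy ``not 2-connected $\Rightarrow$ pick a cut vertex'' silently assumes $G$ is connected; the disconnected case must be disposed of separately (it is easy: if the roots do not all lie in one component, a suitable $v$ --- for instance a root $a$ when $a,b$ share a component and $c$ does not --- already witnesses the second outcome, and if they do all lie in one component you restrict to it). Second, after finding a cut vertex $v$ with some component $D_i$ of $G-v$ containing two roots, you assert the third root lies ``on the other side of $v$''; it may instead lie in $D_i$ as well, a subcase you must also route through the induction on $G[\{v\}\cup V(D_i)]$ (with root set $\{a,b,c\}$ unchanged, and the same verification that a separator of $G_1$ separates in $G$). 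Third, the base case of the induction ($|V(G)|=3$) should be stated. None of these affects the viability of the approach; they are routine completions.
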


Note that in this lemma it is possible that $v\in\{a,b,c\}$.

For distinct vertices $s_1,t_1,s_2,t_2$ in a graph $G$, an
\emph{$(s_1t_1,s_2t_2)$-linkage} consists of an $s_1t_1$-path and an
$s_2t_2$-path that are disjoint.
% Observe that if $G$ contains an $\{a,b,c,d\}-K_4$-minor then $G$
% contains an $(a,b,c,d)$-linkage (for any ordering of
% $a,b,c,d$). Thus an obstruction to the existence of a linkage is
% also an obstruction to the existence of a rooted $K_4$-minor.
\citet{Seymour-DM80} and \citet{Thomassen-EuJC80} independently proved
that there is essentially one obstruction for the existence of a
linkage, as we now describe; see
\citep{Jung70,Watkins-DMJ68,Shiloach80,KLR,Hagerup,Woeginger,PerlShil,Tholey06}
for related results.

For a graph $H$, let $H^+$ denote a graph obtained from $H$ as
follows: for each triangle $T$ of $H$, add a possibly empty clique
$X_T$ disjoint from $H$ and adjacent to each vertex in $T$. We
consider $H^+$ to be implicitly defined by the graph $H$ and the
cliques $X_T$. An \emph{$(a,b,c,d)$-web} is a graph $H^+$, where $H$
is an embedded planar graph with outerface $(a,b,c,d)$, such that each
internal face of $H$ is a triangle, and each triangle of $H$ is a
face.
% For a set $\{a,b,c,d\}$ of vertices, a
An \emph{$\{a,b,c,d\}$-web} is an $(a,b,c,d)$-web for some linear
ordering $(a,b,c,d)$.  That is, in an $\{a,b,c,d\}$-web the vertex
ordering around the outerface is not specified.

\begin{lemma}[\citep{Thomassen-EuJC80,Seymour-DM80}]
  \label{lem:linkweb}
  For distinct vertices $s_1,t_1,s_2,t_2$ in a graph $G$, either:
  \begin{itemize}
  \item $G$ contains an $(s_1t_1,s_2t_2)$-linkage, or
  \item $G$ is a spanning subgraph of an $(s_1,s_2,t_1,t_2)$-web.
  \end{itemize}
\end{lemma}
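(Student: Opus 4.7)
\medskip\noindent
The plan is to prove the two directions separately; the forward direction is a short topological observation, while the reverse requires a substantial inductive attack. Suppose first that $G$ is a spanning subgraph of an $(s_1,s_2,t_1,t_2)$-web $H^+$. Any linkage in $H^+$ restricts to a linkage in $H$, because each maximal segment of a path that enters a clique $X_T$ may be rerouted along one or two edges of the triangle $T$ to which $X_T$ is attached. It therefore suffices to rule out an $(s_1t_1,s_2t_2)$-linkage in the plane graph $H$ with outer face $(s_1,s_2,t_1,t_2)$. Any $s_1t_1$-path, closed up by the arc of the outer face through $s_2$, forms a Jordan curve separating $s_2$ from $t_2$ in the plane, so any $s_2t_2$-path must meet it.

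For the reverse direction, I would induct on $|V(G)|+|E(G)|$, assuming $G$ has no $(s_1t_1,s_2t_2)$-linkage and is edge-maximal with that property. The first stage is to reduce to the case that $G$ is $4$-connected. Given a separation $(A,B)$ of order $k\le 3$ with both sides nontrivial, I would analyse how the four terminals distribute across the two sides and apply induction to a graph obtained from one side by augmenting the separator to a clique. In the order-$3$ case the three separator vertices will form a triangle $T$ in $G$, by edge-maximality together with Lemma~\ref{lem:K3} applied to the three rerouted paths between them, and the side containing fewer terminals is then absorbed into a single clique $X_T$ in the web representation. Once $G$ is $4$-connected, I would invoke the fact that any $4$-connected non-planar graph contains an $(s_1t_1,s_2t_2)$-linkage for every choice of four distinct roots, forcing $G$ to be planar. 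Edge-maximality and $4$-connectivity then force $G$ to be a near-triangulation with $s_1,t_1,s_2,t_2$ all lying on a common face, and the only cyclic ordering on that face that still blocks a linkage is $(s_1,s_2,t_1,t_2)$, up to reversal.

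The main obstacle will be the order-$3$ separation step, because this is exactly where the clique-on-triangle decoration $X_T$ of the web definition is forced to appear. One must verify that each three-vertex separator is really a triangle, that the small side can be replaced by a single $X_T$ without either creating or destroying a linkage across the separator, and that the inductive conclusion on the large side glues correctly into a global web. A secondary, but still delicate, point is the non-planar $4$-connected step, which requires producing a linkage in any $4$-connected non-planar graph from four prescribed roots; this in turn rests on rooted $K_5$- or $K_{3,3}$-minor results of the flavour cited in the introduction.
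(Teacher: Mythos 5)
The paper does not prove this lemma at all: it is quoted as a known theorem of Seymour and Thomassen, with the citation standing in for the proof. So there is no in-paper argument to compare yours against; what you have written is an outline of the published proofs, and it has to be judged as such. Your forward direction is essentially sound (it is the same rerouting-plus-Jordan-curve argument that underlies Lemma~\ref{lem:noweb}; the only quibble is that $s_2$ lies \emph{on} the closed curve you describe, so you should instead say that the $s_1t_1$-path together with the unbounded face separates $s_2$ from $t_2$, forcing any $s_2t_2$-path to share a vertex with it).

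The reverse direction, however, contains a genuine gap at exactly the point where the theorem's difficulty lives. Your step ``any $4$-connected non-planar graph contains an $(s_1t_1,s_2t_2)$-linkage for every choice of four roots'' is precisely Lemma~\ref{lem:thom} (Jung's theorem), which in this paper is \emph{deduced from} Lemma~\ref{lem:linkweb}; invoking it here is circular within the paper's logical structure, and in the original sources this step is where the bulk of the work is done (via a careful analysis of bridges or of a $K_5$-subdivision, not by a one-line appeal to rooted-minor results). The planar $4$-connected case also needs real work: you must show that the absence of a linkage forces all four terminals onto one face in the interleaved order, which does not follow from edge-maximality alone without an argument. The order-$3$ separation reduction you describe is the right shape (and you correctly identify that the separator must be completed to a triangle $T$ carrying a clique $X_T$), but as written the proposal defers every hard step to an assertion, so it is a plan for a proof rather than a proof.
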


Lemma~\ref{lem:linkweb} implies the following result, first proved by
\citet{Jung70}.

\begin{lemma}[\citep{Jung70}]
  \label{lem:thom}
  For distinct vertices $s_1,s_2,t_1,t_2$ in a $4$-connected graph
  $G$, either:
  \begin{itemize}
  \item $G$ contains an $(s_1t_1,s_2t_2)$-linkage, or
  \item $G$ is planar and $s_1,s_2,t_1,t_2$ are on some face in this
    order.
  \end{itemize}
\end{lemma}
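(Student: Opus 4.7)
The plan is to derive this as a fairly short corollary of Lemma~\ref{lem:linkweb}. Assume $G$ contains no $(s_1t_1,s_2t_2)$-linkage. Then Lemma~\ref{lem:linkweb} gives that $G$ is a spanning subgraph of some $(s_1,s_2,t_1,t_2)$-web $H^+$. The goal is to exploit 4-connectivity of $G$ to reduce $H^+$ to the underlying planar graph $H$, and then transfer $H$'s planar embedding to $G$.

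First I would show that every triangle-clique $X_T$ in the definition of $H^+$ must be empty. Fix a triangle $T=\{u,v,w\}$ of $H$. By construction of $H^+$, the only neighbours of $X_T$ outside $X_T$ are $u,v,w$; hence $\{u,v,w\}$ separates $X_T\cap V(G)$ from $V(G)\setminus(X_T\cup\{u,v,w\})$ in $G$. Since the outerface of $H$ has the four distinct vertices $s_1,s_2,t_1,t_2$, the graph $H$ has at least four vertices, so $V(H)\setminus\{u,v,w\}$ is nonempty and lies outside $X_T$. The 4-connectivity of $G$ then forces $X_T\cap V(G)=\emptyset$, and since $G$ spans $H^+$ we get $X_T=\emptyset$. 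Thus $G$ is a spanning subgraph of $H$ itself.

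In particular $G$ is planar. Taking the planar embedding of $H$ (with outerface $(s_1,s_2,t_1,t_2)$) and deleting the edges not in $G$ yields a planar embedding of $G$ in which $s_1,s_2,t_1,t_2$ still lie on the boundary of the outerface in the cyclic order $(s_1,s_2,t_1,t_2)$, as required. The main conceptual step is the first one, ruling out the triangle-cliques; everything else is an immediate consequence of Lemma~\ref{lem:linkweb} and the observation that removing edges can only merge faces, preserving the cyclic order of the four roots on the face that used to be the outerface.
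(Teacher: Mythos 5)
Your proposal is correct and is exactly the derivation the paper intends: the paper offers no proof of this lemma beyond the remark that Lemma~\ref{lem:linkweb} implies it, and your argument supplies the missing details, with the key step being that $4$-connectivity forces every clique $X_T$ to be empty since $T$ would otherwise be a $3$-cut. (If you want to avoid the slightly informal claim that deleting edges preserves the cyclic order of the roots on the outer face, note that the outer face of the $4$-connected plane graph $G$ is bounded by a cycle through $s_1,s_2,t_1,t_2$, and any cyclic order other than $(s_1,s_2,t_1,t_2)$ on that cycle would itself yield an $(s_1t_1,s_2t_2)$-linkage.)
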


Lemma~\ref{lem:thom} makes sense since every 3-connected planar graph
has a unique planar embedding up to the choice of outerface
\citep{Whitney-AJM33c}. We implicitly use this fact throughout the
paper.

We now describe our first obstruction for a graph to contain a rooted
$K_4$-minor.

\begin{lemma}\label{lem:noweb}
  Every $(a,b,c,d)$-web $G$ contains no $\{a,b,c,d\}$-minor.
\end{lemma}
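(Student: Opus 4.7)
The plan is to turn the non-existence of a rooted $K_4$-minor into the non-existence of an $(ac,bd)$-linkage, and then verify the latter by reducing from $H^+$ to the underlying plane graph $H$.

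First, I would observe that any $\{a,b,c,d\}$-minor in $G$ produces an $(ac,bd)$-linkage: choose an edge $uv$ realising the adjacency between the branch sets $G_a$ and $G_c$, with $u\in G_a$ and $v\in G_c$, and splice it with an $au$-path inside $G_a$ and a $vc$-path inside $G_c$ to obtain an $ac$-path contained in $G_a\cup G_c$. Do the same for $G_b,G_d$ to obtain a $bd$-path contained in $G_b\cup G_d$. Pairwise disjointness of the four branch sets gives disjointness of the two paths. So it suffices to show that $G=H^+$ admits no $(ac,bd)$-linkage.

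The core step is to project any candidate linkage $(P_1,P_2)$ in $H^+$ down to a linkage in $H$. The essential observation is that every edge of $H^+$ with both endpoints in $V(H)$ already lies in $H$, because the new edges of $H^+$ either stay inside some clique $X_T$ or run between $X_T$ and $T$. Hence each maximal subpath of $P_i$ whose interior lies in $X_T$ is entered and left via two distinct vertices of the triangle $T$, and I can replace that detour by the corresponding triangle edge of $H$. Doing this for every detour yields walks $W_1,W_2$ in $H$ with $V(W_i)=V(P_i)\cap V(H)$; since each $P_i$ is a simple path, so is $W_i$. Disjointness of $V(P_1)$ and $V(P_2)$ then yields an $(ac,bd)$-linkage in $H$.

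Finally, I would invoke the classical planar argument: since $H$ is a plane graph whose outerface is the 4-cycle with vertices $a,b,c,d$ in cyclic order, any $ac$-path $W_1$ together with the outer arc from $a$ through $b$ to $c$ bounds a Jordan region containing $b$ on its boundary but not $d$, so every $bd$-path in $H$ must meet $W_1$. This contradicts the projected linkage in $H$, completing the argument.

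The main obstacle I expect is the projection step, specifically the bookkeeping needed to confirm that every $X_T$-detour has two \emph{distinct} bracketing vertices in $T$ (so the replacement is an actual edge of $H$) and that the resulting $W_i$ stays simple. Both follow from $P_i$ being a simple path together with the fact that $X_T$'s only external neighbours lie in $T$, but this requires a careful case analysis that I would write out explicitly.
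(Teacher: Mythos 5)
Your argument is correct and its first step---converting a rooted $K_4$-minor into an $(ac,bd)$-linkage via paths through $G_a\cup G_c$ and $G_b\cup G_d$---is exactly the paper's first proof of Lemma~\ref{lem:noweb}. Where you diverge is that the paper then simply cites Seymour and Thomassen for the fact that an $(a,b,c,d)$-web contains no $(ac,bd)$-linkage, whereas you prove this easy direction from scratch: projecting the linkage from $H^+$ down to $H$ by replacing each excursion into a clique $X_T$ with the triangle edge between its two (necessarily distinct) entry and exit vertices in $T$, and then applying a Jordan-curve argument in the plane graph $H$. The projection step is sound for the reasons you give ($X_T$ has no neighbours outside $T$, and simplicity of $P_i$ forces the bracketing vertices to be distinct). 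The one place to tighten is the final topological step: since $b$ lies \emph{on} the closed curve $J=W_1\cup\{ab,bc\}$ rather than strictly inside a region, you need the extra observation that every edge of $H$ at $b$ other than $ab$ and $bc$ points into the bounded region of $J$ (because the outer face lies in the unbounded region), so that $W_2\setminus\{b\}$ is a connected set meeting both regions while avoiding $J$---that is the actual contradiction. As written, ``$b$ on the boundary and $d$ outside, hence every $bd$-path meets $W_1$'' is not yet a valid inference. You may also like the paper's second, slicker proof: absorb all vertices into branch sets and contract, which would exhibit $K_4$ as an outerplanar graph.
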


\begin{proof}[First proof]
  Since $G$ is an $(a,b,c,d)$-web, $G$ contains no $(ac,bd)$-linkage
  \citep{Seymour-DM80,Thomassen-EuJC80}.  But if $G$ contains a
  $K_4$-minor $A,B,C,D$ respectively rooted at $a,b,c,d$, then some
  $ac$-path (contained in $A\cup C$) is disjoint from some $bd$-path
  (contained in $B\cup D$). Thus $G$ contains no $\{a,b,c,d\}$-minor.
\end{proof}

\begin{proof}[Second proof]
  Suppose $G$ contains an $\{a,b,c,d\}$-minor. Since $G$ is connected,
  we may assume that every vertex is in some branch set.  Contracting
  each edge with both endpoints in the same branch set produces an
  outerplanar $K_4$, which is a contradiction.
\end{proof}

We will need the following result by \citet{Dirac60}.

\begin{lemma}[\citep{Dirac60}]
  \label{lem:dirac}
  For every set $S$ of $k$ vertices in a $k$-connected graph $G$,
  there is a cycle in $G$ containing $S$.
\end{lemma}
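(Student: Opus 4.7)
The plan is to prove Dirac's fan-type result by induction on $k$. The base case $k=2$ is the standard fact that in a $2$-connected graph any two vertices lie on a common cycle, which follows directly from Menger's theorem (two internally disjoint paths between the vertices concatenate to a cycle).

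For the inductive step, assume the lemma for $k-1$ and let $S=\{v_1,\dots,v_k\}$ in a $k$-connected graph $G$. Since $G$ is also $(k-1)$-connected, induction gives a cycle $C$ containing $S'=\{v_1,\dots,v_{k-1}\}$. If $v_k\in V(C)$ we are done, so assume $v_k\notin V(C)$. The plan is then to invoke the fan form of Menger's theorem: in a $k$-connected graph, from a vertex $v$ to any set $U$ of at least $k$ other vertices, there exist $k$ pairwise internally disjoint $v$--$U$ paths ending at $k$ distinct vertices of $U$. Applied with $v=v_k$ and $U=V(C)$ (assuming $|V(C)|\geq k$ for now), this yields paths $P_1,\dots,P_k$ from $v_k$ to distinct vertices $u_1,\dots,u_k\in V(C)$, whose internal vertices avoid $C$.

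The $u_i$'s divide $C$ into $k$ arcs. At most $k-1$ vertices of $S'$ lie in the interiors of these arcs (the others, if any, coincide with some $u_i$), so by pigeonhole some arc $A$ with endpoints $u_i,u_j$ has no vertex of $S'$ in its interior. Deleting the interior of $A$ from $C$ and attaching $P_i\cup P_j$ via $v_k$ produces a cycle through $S'\cup\{v_k\}=S$, completing the induction.

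The one subtlety, and the main obstacle, is the degenerate case $|V(C)|<k$: since $C$ already contains the $k-1$ vertices of $S'$, this forces $|V(C)|=k-1$ with $V(C)=S'$. Here the fan lemma only guarantees $|V(C)|=k-1$ internally disjoint $v_k$--$C$ paths, but this still suffices — the paths end at \emph{all} vertices of $C$, so every resulting arc has its endpoints in $S'$ and its interior disjoint from $S'$, and we may replace any arc to obtain the desired cycle. With this verification, the induction goes through.
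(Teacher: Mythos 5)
Your proof is correct. Note that the paper does not prove this lemma at all: it is quoted as a known result of Dirac (1960), so there is no in-paper argument to compare against. What you give is the standard textbook proof: induction on $k$ with base case $k=2$ from Menger, then the fan lemma to obtain $\min(k,|V(C)|)$ internally disjoint paths from $v_k$ to a cycle $C\supseteq S'$, followed by the pigeonhole observation that the $k$ fan endpoints cut $C$ into $k$ arcs while only $k-1$ vertices of $S'$ can lie in arc interiors, so some arc can be rerouted through $v_k$ without losing any vertex of $S'$. You also correctly isolate and dispose of the degenerate case $V(C)=S'$, where the fan reaches every vertex of $C$ and any arc may be replaced. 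The argument is complete and needs no changes.
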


%%%%%%%%%%%%%%%%%%%%%%%%%%%%%%%%%%%%%%%%%%%%%%%%%%%%%%%%%%%%%%%%%%%%
\section{The 4-Connected Case}
\label{sec:4Conn}

The following result characterises when a 4-connected graph contains a
rooted $K_4$-minor.  It is analogous to Lemma~\ref{lem:thom}.

\begin{theorem} \label{thm:4con} For distinct vertices $a,b,c,d$ in a
  $4$-connected graph $G$, either:
  \begin{itemize}
  \item $G$ contains an $\{a,b,c,d\}$-minor, or
  \item $G$ is planar and $a,b,c,d$ are on a common face.
  \end{itemize}
\end{theorem}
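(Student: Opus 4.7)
I would prove Theorem \ref{thm:4con} by treating the two implications separately. The direction ``planar with $a,b,c,d$ on a face $\Rightarrow$ no $\{a,b,c,d\}$-minor'' is immediate from the lemmas already established: if $G$ is planar with $a,b,c,d$ on a common face, say in cyclic order $a,b,c,d$, then no $(ac,bd)$-linkage can exist (two such paths in $G$ would have to cross in the planar embedding by a Jordan-curve argument, since $a,c$ separate $b,d$ on the face boundary), so by Lemma \ref{lem:linkweb} $G$ is a spanning subgraph of an $(a,b,c,d)$-web, and by Lemma \ref{lem:noweb} this web, and hence $G$, contains no $\{a,b,c,d\}$-minor.

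For the converse, assume $G$ is $4$-connected and contains no $\{a,b,c,d\}$-minor; I want to show $G$ is planar with $a,b,c,d$ on a common face. The plan is to apply Lemma \ref{lem:thom} to the pair $(ac,bd)$: if the ``planar'' alternative holds, we are done. Otherwise $G$ contains an $(ac,bd)$-linkage, consisting of disjoint paths $P$ from $a$ to $c$ and $Q$ from $b$ to $d$, and I will derive a contradiction by exhibiting an $\{a,b,c,d\}$-minor.

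To build the minor, combine the linkage with a cycle $\Gamma$ through $a,b,c,d$ supplied by Lemma \ref{lem:dirac}. After possibly relabelling we may assume $a,b,c,d$ appear on $\Gamma$ in the cyclic order $a,b,c,d$; if instead they appear in a different cyclic order on $\Gamma$, reapply Lemma \ref{lem:thom} to the pair matching that order (for instance, $(ab,cd)$ if the order is $a,b,d,c$) to obtain a compatible linkage. Splitting each of the four arcs of $\Gamma$ at an interior vertex yields four pairwise-disjoint connected subgraphs $A \ni a$, $B \ni b$, $C \ni c$, $D \ni d$ whose splitting edges realise the four cyclic adjacencies $AB$, $BC$, $CD$, $DA$. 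To obtain the remaining diagonal adjacencies $AC$ and $BD$, absorb a suitable subpath of $P$ into $A$ (so that $A$ reaches a vertex adjacent to $c\in C$) and a subpath of $Q$ into $B$, giving the sought $\{a,b,c,d\}$-minor.

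The main obstacle is that $P$ and $Q$ can share interior vertices with $\Gamma$, so this naive absorption can destroy disjointness. I expect to resolve this by choosing $P$, $Q$, and $\Gamma$ to minimise some overlap parameter such as $|V(P)\cap V(\Gamma)|+|V(Q)\cap V(\Gamma)|$, and then arguing that along each arc of $\Gamma$ the conflicting vertices can be reassigned between branch sets without destroying connectivity or any of the six required adjacencies; $4$-connectivity should guarantee enough slack to perform each such reassignment. Making this merging argument precise across the various intersection patterns of $P$, $Q$, and $\Gamma$ is the technical heart of the proof.
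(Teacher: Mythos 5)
Your overall skeleton is exactly the paper's: the easy direction via Lemma~\ref{lem:noweb}, and for the converse a cycle $\Gamma$ through $a,b,c,d$ from Lemma~\ref{lem:dirac}, a linkage from Lemma~\ref{lem:thom}, and then a merging of the two into a rooted $K_4$-minor. The first direction is fine, and the reduction to ``cycle plus linkage implies rooted $K_4$-minor'' is the right move. But that last implication is the entire technical content of the theorem, and your proposal leaves it as an admitted sketch (``I expect to resolve this\dots''). This is a genuine gap, not a routine detail: the paper devotes a separate result (Lemma~\ref{lem:cycle}) and a full page of argument to precisely this step.

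Concretely, your proposed fix --- minimise $|V(P)\cap V(\Gamma)|+|V(Q)\cap V(\Gamma)|$ and then locally reassign conflicting vertices, using $4$-connectivity for ``slack'' --- does not reach the hard case. After the obvious reductions the extremal situation is $G=P\cup Q\cup \Gamma$ with \emph{every} vertex of $P\cup Q$ lying on $\Gamma$ and the two paths alternating around the cycle; there the overlap is total and cannot be decreased by rechoosing $P$, $Q$, or $\Gamma$, and no local reassignment is available --- one needs the explicit global labelling of $V(P)$ into the two sets ``$a$''/``$c$'' and of $V(Q)$ into ``$b$''/``$d$'' that the paper constructs (Figure~\ref{fig:K4minor}). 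The paper gets to that extremal configuration not by minimising overlap but by taking a minimal counterexample and contracting edges (a degree-$2$ vertex, two consecutive like-coloured vertices of $\Gamma$, or the edge from $a$ to its $P$-neighbour), which is a genuinely different mechanism from what you describe. Note also that the paper's Lemma~\ref{lem:cycle} holds for arbitrary graphs and uses no connectivity hypothesis at all --- it is reused verbatim in the $3$-connected case (Theorem~\ref{thm:3con}) --- so an argument leaning on $4$-connectivity for the merging step would prove something strictly weaker than what the paper needs. To complete your proof you would have to supply Lemma~\ref{lem:cycle} in full, and your sketch does not yet contain the ideas that make it work.
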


\begin{proof}
  Lemma~\ref{lem:noweb} implies that if $G$ contains an
  $\{a,b,c,d\}$-minor, then the second outcome does not occur.  To
  prove the converse, assume that $G$ is non-planar, or if $G$ is
  planar then $a,b,c,d$ are not on a common face. Since $G$ is
  4-connected, by Lemma~\ref{lem:dirac}, $G$ contains a cycle $C$
  through $a,b,c,d$.  Without loss of generality, $a,b,c,d$ appear in
  this order in $C$.  By Lemma~\ref{lem:thom}, $G$ contains an
  $(ac,bd)$-linkage. The result follows from Lemma~\ref{lem:cycle}
  below.
\end{proof}

\begin{lemma} \label{lem:cycle} Let $C$ be a cycle in a graph $G$
  containing vertices $a,b,c,d$ in this order. If $G$ contains an
  $(ac,bd)$-linkage then $G$ contains an $\{a,b,c,d\}$-minor.
\end{lemma}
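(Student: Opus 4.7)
The plan is to exhibit four pairwise-disjoint connected branch sets $A$, $B$, $C'$, $D$ containing $a$, $b$, $c$, $d$ respectively, such that every pair is joined by an edge, thereby realizing the $\{a,b,c,d\}$-minor. Let $\alpha$, $\beta$, $\gamma$, $\delta$ be the arcs of $C$ from $a$ to $b$, $b$ to $c$, $c$ to $d$, $d$ to $a$. Because the linkage paths $P$ and $Q$ are vertex-disjoint, $P$ avoids $\{b,d\}$ and $Q$ avoids $\{a,c\}$.

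Begin with the natural partition of $V(C)$:
\[
A_0 = (V(\alpha)\cup V(\delta))\setminus\{b,d\}, \quad C'_0 = (V(\beta)\cup V(\gamma))\setminus\{b,d\}, \quad B_0 = \{b\}, \quad D_0 = \{d\}.
\]
Each piece is a sub-path of $C$, hence connected; the cycle edges at $b$ and $d$ immediately furnish the four adjacencies $A$--$B$, $B$--$C'$, $C'$--$D$, $D$--$A$. Only the two ``diagonal'' adjacencies $A$--$C'$ and $B$--$D$ remain, and these must come from $P$ and $Q$.

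For the $A$--$C'$ adjacency I traverse $P$ from $a$ to $c$. Because $P$ avoids $\{b,d\}$, every vertex of $P$ lies in $A_0\cup C'_0\cup (V(G)\setminus V(C))$. Greedily assign each off-cycle vertex of $P$ to the same branch as its $P$-predecessor; the resulting extensions $A_1\supseteq A_0$ and $C'_1\supseteq C'_0$ remain connected, since every newly-added vertex is adjacent via a $P$-edge to an already-placed member. The sequence of branch labels along $P$ begins at $a\in A$ and ends at $c\in C'$, so at least one $P$-edge must cross from $A$ to $C'$, supplying the desired adjacency.

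The $B$--$D$ adjacency is the main obstacle: the symmetric greedy argument using $Q$ fails whenever $Q$ has internal vertices on $C$, since those lie in $A_1\cup C'_1$ rather than $B_0\cup D_0$. My plan is to refine the partition by transferring each cycle vertex $y\in V(Q)\cap V(C)\setminus\{b,d\}$ out of $A_1$ or $C'_1$ and into $B$ or $D$ according to which half of $Q$ it lies on, and to assign the off-cycle vertices of $Q$ to $B$, $D$ by the same greedy rule used for $P$. The crux is verifying that $A$ and $C'$ stay connected after these removals; this should follow because the $P$-derived extensions already provide alternative paths in $A_1$ and $C'_1$ that bypass the removed cycle vertices. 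A cleaner route would be to choose the linkage $(P,Q)$ so as to minimize $|V(P)\cap V(C)|+|V(Q)\cap V(C)|$ and argue via an exchange argument that in a minimum linkage both $P$ and $Q$ may be taken internally disjoint from $C$, whereupon the greedy construction applies symmetrically on both sides and delivers the $K_4$-minor directly.
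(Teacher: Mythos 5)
There is a genuine gap, and it sits exactly where you flag ``the crux.'' Your first half is fine: extending $A_0$ and $C'_0$ greedily along $P$ keeps them connected and forces a $P$-edge between $A$ and $C'$. But the repair for the $B$--$D$ adjacency does not go through. When you delete the vertices of $V(Q)\cap V(C)\setminus\{b,d\}$ from $A_1$ and $C'_1$, the core of $A_1$ is a subpath of $C$ through $a$, and removing interior vertices of that subpath shatters it into pieces; nothing guarantees that the off-cycle vertices of $P$ reconnect those pieces (note $P$ and $Q$ are disjoint, so $P$ cannot be rerouted through the removed vertices, but $P$ may simply never visit the stranded pieces). Even if you discard the stranded pieces, you may lose the adjacencies you built earlier: the $A$--$B$ adjacency was the cycle edge at $b$, whose other endpoint may itself be a $Q$-vertex you just transferred, and the $A$--$C'$ edge supplied by $P$ may now join two fragments no longer attached to $a$ or $c$. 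The fallback you propose is also unavailable: it is false that a linkage minimizing $|V(P)\cap V(C)|+|V(Q)\cap V(C)|$ can be taken internally disjoint from $C$. The extremal case is precisely $V(G)=V(C)=V(P\cup Q)$ with the $P$- and $Q$-vertices alternating around $C$ (every vertex of degree $3$); there every $ac$-path and every $bd$-path consists entirely of cycle vertices, so no exchange argument can clear $C$.

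This alternating case is the whole difficulty of the lemma, and it is the case your construction cannot reach. The paper handles it by taking a minimal counterexample: after deleting everything outside $P\cup Q\cup C$, contracting degree-$2$ vertices, contracting any cycle edge whose endpoints lie in the same linkage path, and contracting $av$ when the $P$-neighbour $v$ of $a$ lies on the near arcs, one is reduced to the alternating configuration, where an explicit four-way labelling of $P\cup Q$ (splitting $P$ at $a$'s neighbour and $Q$ at a suitable point) yields the rooted $K_4$-minor. To salvage your direct approach you would need an argument of comparable strength for how $Q$ threads through $C$; as written, the connectivity claim is asserted rather than proved, and it fails without further hypotheses.
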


\begin{proof}
  Let $G$ be a counterexample firstly with $|V(G)|$ minimum and then
  with $|E(G)|$ minimum. If $V(G)=\{a,b,c,d\}$ then $G\cong K_4$. Now
  assume that $|V(G)|\geq 5$, and the result holds for graphs with
  less than $|V(G)|$ vertices, or with $|V(G)|$ vertices and less than
  $|E(G)|$ edges. 

  Let $P$ be an $ac$-path disjoint from some $bd$-path $Q$.  Let
  $R_{ab}$ be the $ab$-path contained in $C$ avoiding $c$ and $d$.
  Similarly define $R_{bc}$, $R_{cd}$ and $R_{da}$. If some vertex or
  edge $x$ is not in $P\cup Q\cup C$, then $G-x$ is not a
  counterexample, and thus contains an $\{a,b,c,d\}$-minor. Now assume
  that $G=P\cup Q\cup C$. We show that contracting some edge gives a
  graph that satisfies the hypothesis.

  Suppose that some vertex $v$ has degree 2.  For at least one edge
  $e$  incident to $v$, the endpoints  of $e$ are not both in
  $\{a,b,c,d\}$. Thus the   contraction $G/e$ satisfies the
  hypothesis, and  $G/e$ and hence $G$
  contains an $\{a,b,c,d\}$-minor. Now assume that every vertex has
  degree at least 3. Thus $V(G)=V(C)=V(P\cup Q)$.

  Colour $P$ red, and colour $Q$ blue. Suppose that consecutive
  vertices $u$ and $v$ in $C$ receive the same colour.  Then $G/uv$
  satisfies the hypothesis, as illustrated in
  Figure~\ref{fig:consecutive_cycle} in the case that $u$ and $v$ are
  red. By the choice of $G$, $G/uv$ and thus $G$ contains an
  $\{a,b,c,d\}$-minor. Now assume that the colours alternate around
  $C$. In particular, $|V(P)|=|V(Q)|$. If $P=ac$ then $Q=bd$ and and
  we are done. Now assume that $P$ contains some internal vertex.

  \begin{figure}[!htb]
    \begin{center}
      \includegraphics[height=45mm]{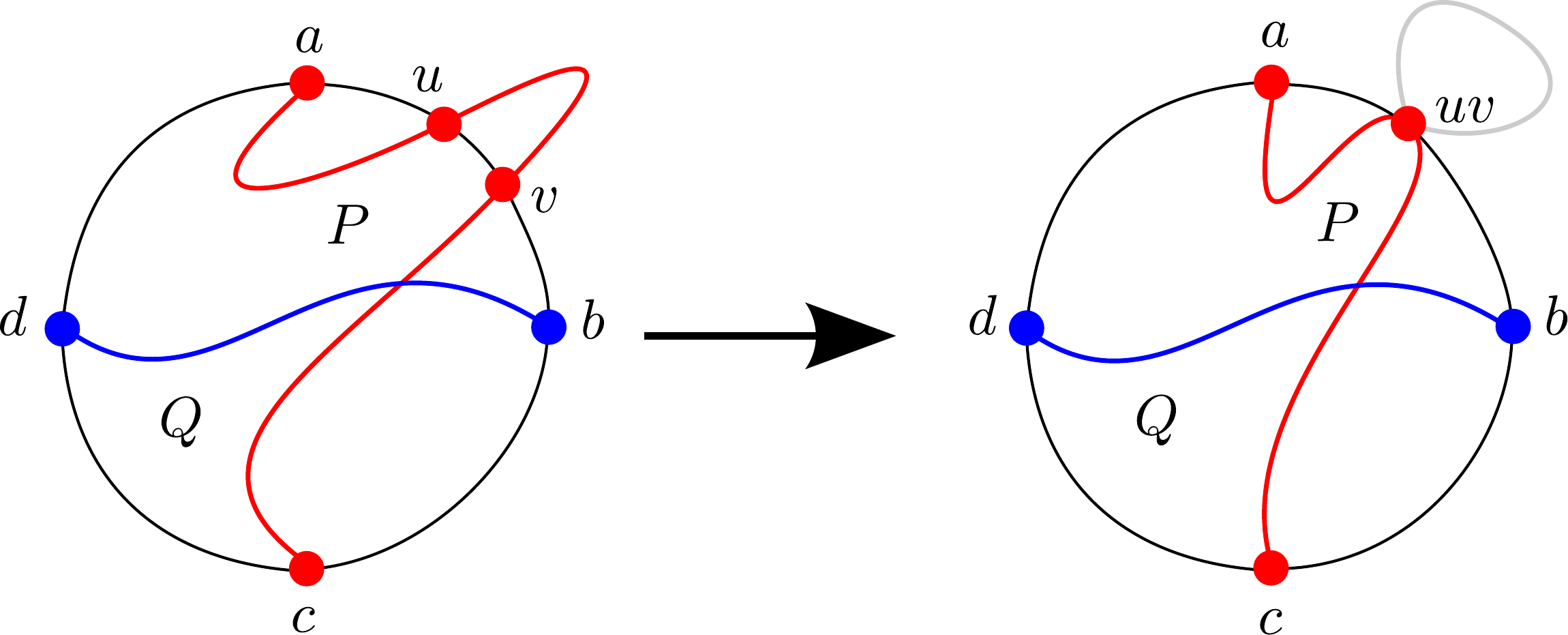}
    \end{center}
    \caption{\label{fig:consecutive_cycle} If consecutive vertices $u$
      and $v$ in $C$ receive the same colour then contract $uv$.}
  \end{figure}

  Let $v$ be the neighbour of $a$ in $P$, and let $w$ be the neighbour
  of $c$ in $P$. If $v$ is in $R_{da}\cup R_{ab}$, then $G/av$
  satisfies the hypothesis, as illustrated in
  Figure~\ref{fig:next_cycle}. By the choice of $G$, $G/av$ and thus $G$
  contains an $\{a,b,c,d\}$-minor. Now assume that $v\in R_{bc}\cup
  R_{cd}$. Similarly, $w\in R_{da}\cup R_{ab}$. Since $P$ and $Q$ are
  disjoint, $v\in R_{bc}\cup R_{cd}\setminus \{b,d\}$ and $w\in
  R_{da}\cup R_{ab}\setminus\{b,d\}$. Thus $v\neq w$.  That is, $P$
  (and $Q$ also) contains at least two internal vertices.  Label $v$
  and $a$ by ``$a$''.  Label every other vertex in $P$ by ``$c$''.

  \begin{figure}[H]
    \begin{center}
      \includegraphics[height=45mm]{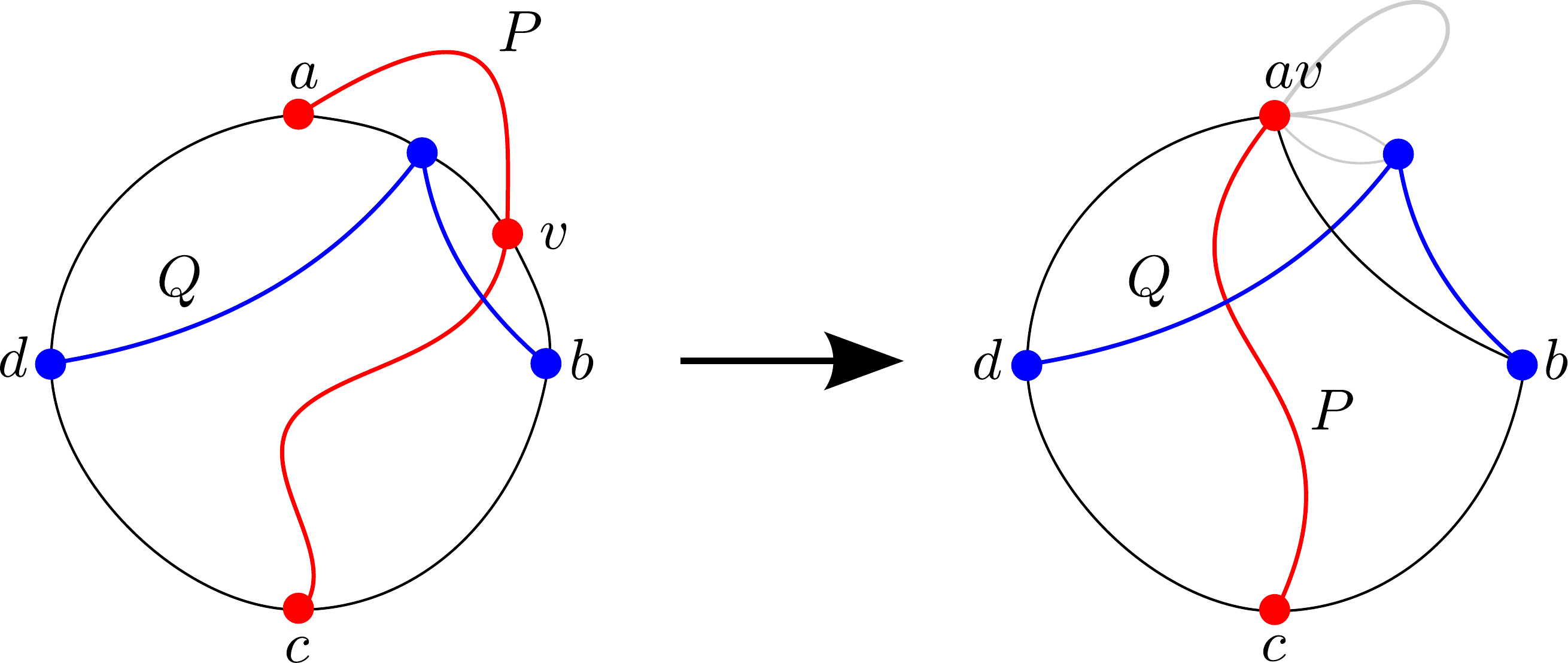}
    \end{center}
    \caption{\label{fig:next_cycle} If $v$ is in $R_{da}\cup R_{ab}$
      then contract $av$.}
  \end{figure}

  Let $x$ be the neighbour of $v$ between $v$ and $c$ in $R_{bc}\cup
  R_{cd}$.  Let $y$ be the neighbour of $a$ between $w$ and $a$ in
  $R_{da}\cup R_{ab}$. Since the colours around $C$ alternate, $x$ and
  $y$ are in $Q$. Without loss of generality, $b,x,y,d$ appear in this
  order in $Q$.  Label the $yd$-subpath of $Q$ by ``$d$'', and label
  the remaining vertices in $Q$ (including $x$) by ``$b$''.  Thus $x$,
  which is labelled ``$b$'', is adjacent to some vertex in $Q$
  labelled ``$d$''. The neighbours of $x$ in $C$ are labelled ``$a$''
  and ``$c$'', and the neighbours of $y$ in $C$ are labelled ``$a$''
  and ``$c$''.  The sets of vertices labelled
  ``$a$'',``$b$'',``$c$'',``$d$'' form pairwise disjoint subpaths of
  $P$ or $Q$ respectively containing $a,b,c,d$.  Thus contracting the
  vertices with the same label into a single vertex gives an
  $\{a,b,c,d\}$-minor in $G$, as illustrated in Figure~\ref{fig:K4minor}.
\end{proof}

  \begin{figure}[H]
    \begin{center}
      \includegraphics[height=45mm]{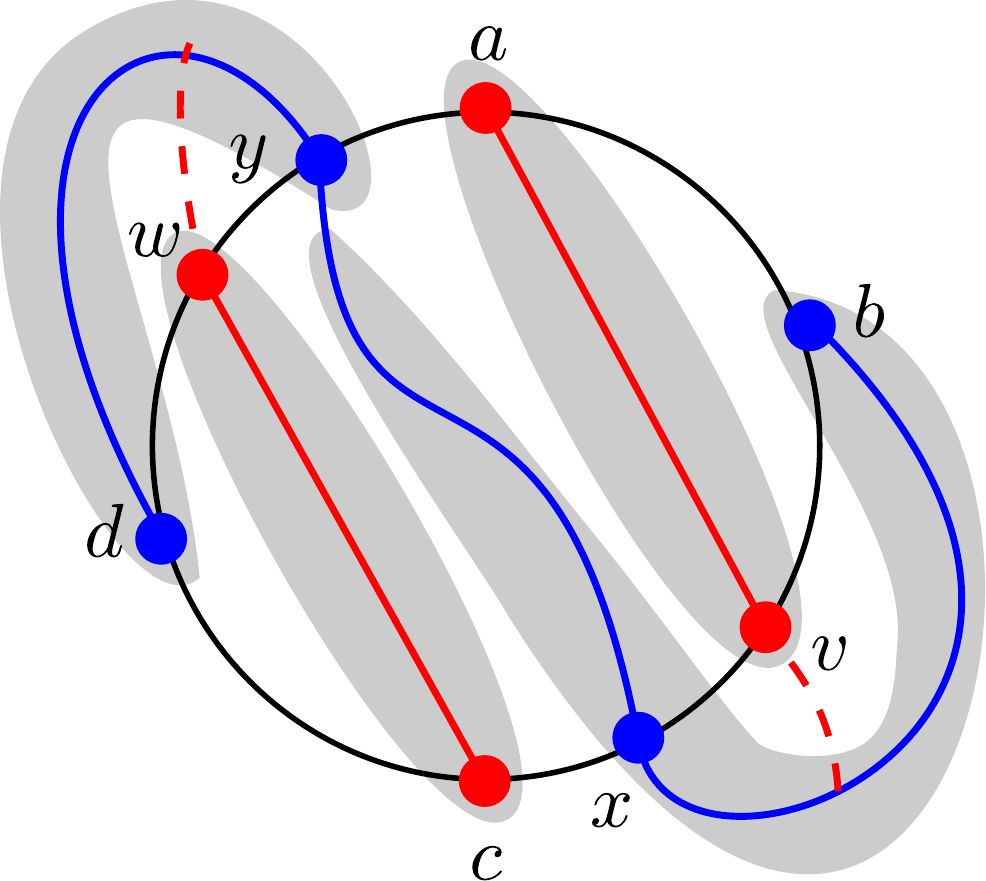}
    \end{center}
    \caption{\label{fig:K4minor} Construction of a rooted $K_4$-minor
      in Lemma~\ref{lem:cycle}. }
  \end{figure}

%%%%%%%%%%%%%%%%%%%%%%%%%%%%%%%%%%%%%%%%%%%%%%%%%%%%%%%%%%%%%%%%%%%%
  \section{The 3-Connected Case}
  \label{sec:3Conn}

  We have the following characterisation for $3$-connected graphs.

  \begin{theorem} \label{thm:3con} The following are equivalent for
    distinct vertices $a,b,c,d$ in a $3$-connected graph $G$:
    \begin{enumerate}
    \item $G$ contains an $\{a,b,c,d\}$-minor,
    \item $G$ is not a spanning subgraph of an $\{a,b,c,d\}$-web,
    \item $G$ contains an $(ab,cd)$-linkage, an $(ac,bd)$-linkage, and
      an $(ad,bc)$-linkage.
    \end{enumerate}
  \end{theorem}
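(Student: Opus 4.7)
The plan is to prove the cycle of implications $(1)\Rightarrow(2)\Rightarrow(3)\Rightarrow(1)$. The first two are essentially immediate from the tools already in place. For $(1)\Rightarrow(2)$, I argue contrapositively: if $G$ is a spanning subgraph of an $\{a,b,c,d\}$-web $W$, then any $\{a,b,c,d\}$-minor of $G$ is also one of $W$, contradicting Lemma~\ref{lem:noweb}. For $(1)\Rightarrow(3)$, given branch sets $A,B,C,D$ rooted respectively at $a,b,c,d$, the connected subgraph $A\cup B$ (together with a witnessing $A$--$B$ edge) contains an $ab$-path, while $C\cup D$ contains a $cd$-path disjoint from it; the other two linkages arise from the analogous pairings. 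For $(2)\Rightarrow(3)$, contrapositively again: if (say) the $(ab,cd)$-linkage is missing, Lemma~\ref{lem:linkweb} presents $G$ as a spanning subgraph of an $(a,c,b,d)$-web, which is by definition an $\{a,b,c,d\}$-web.

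The substance of the theorem lies in the implication $(3)\Rightarrow(1)$, where $3$-connectivity and all three linkages must be combined to build the rooted minor. I would split on whether $G$ is $4$-connected. If it is, Theorem~\ref{thm:4con} applies: either the rooted minor is produced directly, or $G$ is planar with $a,b,c,d$ on a common face in some cyclic order, say $(a,b,c,d)$, and then the planar embedding blocks the crossing $(ac,bd)$-linkage (any two such paths must intersect), contradicting $(3)$.

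When $G$ is $3$-connected but not $4$-connected, $G$ admits a $3$-cut $S$ with separation $G=G_1\cup G_2$ and $V(G_1)\cap V(G_2)=S$. My plan is to induct on $|V(G)|$, passing to the torsos $G_i^+$ obtained from $G_i$ by adding a triangle on $S$; these torsos are $3$-connected, and any linkage in $G$ whose endpoints lie in $V(G_i^+)$ can be simulated in $G_i^+$ by using the new triangle edges as shortcuts across the opposite side. The case analysis is driven by how $\{a,b,c,d\}$ distributes relative to $S$: when one side contains three or four of the roots, one applies induction in the corresponding torso (with a vertex of $S$ standing in for any absent root) and lifts the resulting minor back to $G$ by absorbing the opposite side through $S$. \emph{The main obstacle} I expect is the symmetric $2$--$2$ split, say $\{a,b\}\subseteq V(G_1)\setminus S$ and $\{c,d\}\subseteq V(G_2)\setminus S$: here one must pick compatible surrogate roots in $S$ for the absent vertices on each side, apply induction to both torsos simultaneously, and splice the resulting rooted minors across $S$ into a single rooted $K_4$-minor in $G$; the three linkages in $G$ are precisely what guarantee enough cross-cut connectivity to make the splicing succeed, with the interaction between Lemma~\ref{lem:K3} on each side and the global linkage hypothesis doing the bulk of the work.
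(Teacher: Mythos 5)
Your implications $(1)\Rightarrow(2)$ and $(2)\Rightarrow(3)$ are correct and match the paper (Lemmas~\ref{lem:noweb} and~\ref{lem:linkweb}), and your handling of the $4$-connected subcase of $(3)\Rightarrow(1)$ via Theorem~\ref{thm:4con} is sound. The problem is the remaining case, which is where the whole content of the theorem lives, and there your plan has a genuine gap. Consider a $3$-cut $S=\{u,v,w\}$ with $a,b\in V(G_1)\setminus S$ and $c,d\in V(G_2)\setminus S$. Applying induction to the two torsos with surrogate roots, say a $K_4$-minor rooted at $a,b,u,v$ in $G_1^+$ and one rooted at $c,d,u,v$ in $G_2^+$, and then identifying the branch sets of $u$ and of $v$ across $S$, yields six branch sets $A,B,C,D,U,V$ in which $A$ is adjacent to $B,U,V$ and $C$ is adjacent to $D,U,V$ --- but none of the four cross-cut adjacencies $AC,AD,BC,BD$ required for a $K_4$-minor rooted at $a,b,c,d$ is provided. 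So the splicing you describe does not produce the object you need; you would have to force some branch sets to straddle $S$, which is a different (and delicate) statement to prove by induction. A second unaddressed point is that the inductive hypothesis in a torso requires all three linkages \emph{for the surrogate root set} (e.g.\ for $a,b,c,u$), and these do not follow automatically from the three linkages for $a,b,c,d$ in $G$; your ``simulate linkages by triangle shortcuts'' remark covers projecting a given linkage into a torso, not manufacturing linkages between new terminal sets. Finally, note that you never use the $(ad,bc)$- and $(ab,cd)$-linkages anywhere concrete, whereas all three hypotheses are genuinely needed.

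For comparison, the paper avoids decomposing along $3$-cuts entirely. It first asks whether some cycle passes through all of $a,b,c,d$; if so, whatever the cyclic order, one of the three assumed linkages is the ``crossing'' one for that order, and Lemma~\ref{lem:cycle} (the workhorse already used for Theorem~\ref{thm:4con}) produces the rooted minor. If no such cycle exists, it takes a cycle through $a,b,c$ (Lemma~\ref{lem:dirac}), a Menger fan of three paths from $d$ to that cycle, and a short colouring argument that either manufactures a cycle through all four vertices (contradiction) or exhibits the rooted $K_4$-minor directly. If you want to salvage your approach, the $2$--$2$ split is the place where you must either prove a stronger inductive statement about how branch sets meet $S$ or fall back on an argument of the paper's type.
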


\begin{proof}
  Lemma~\ref{lem:noweb} implies (1) $\Longrightarrow$ (2).
  Lemma~\ref{lem:linkweb} implies (2) $\Longrightarrow$ (3).  It
  remains to prove (3) $\Longrightarrow$ (1).  First suppose that
  some cycle $C$ contains $a,b,c,d$.  Without loss of generality
  assume that the order of the vertices in $C$ is $(a,b,c,d)$. Since
  $G$ contains an $(ac,bd)$-linkage, by Lemma \ref{lem:cycle}, $G$
  contains an $\{a,b,c,d\}$-minor. Now assume that no cycle contains
  $a,b,c,d$. By Lemma~\ref{lem:dirac}, since $G$ is $3$-connected, $G$
  contains a cycle $C$ through $a,b,c$. Colour red the vertices in the
  $ab$-path in $C$ that avoids $c$. Likewise colour blue the vertices
  in the $bc$-path in $C$ that avoids $a$. And colour green the
  vertices in the $ca$-path in $C$ that avoids $b$. Note that $a,b$
  and $c$ each receive two colours.  By Menger's Theorem there exists
  three paths from $d$ to $C$, such that each path intersects $C$ in
  one vertex, and any two of the paths only intersect at $d$. Colour
  each path with the colour of its vertex in $C$. If two paths receive
  the same colour, then we obtain a cycle through $a,b,c,d$, as
  illustrated in Figure~\ref{fig:3con}(a).  Now assume that no two
  paths receive the same colour. In this case we obtain an
  $\{a,b,c,d\}$-minor, as illustrated in Figure~\ref{fig:3con}(b).
\end{proof}

\begin{figure}[!htb]
  \begin{center}
    \includegraphics[width=0.8\textwidth]{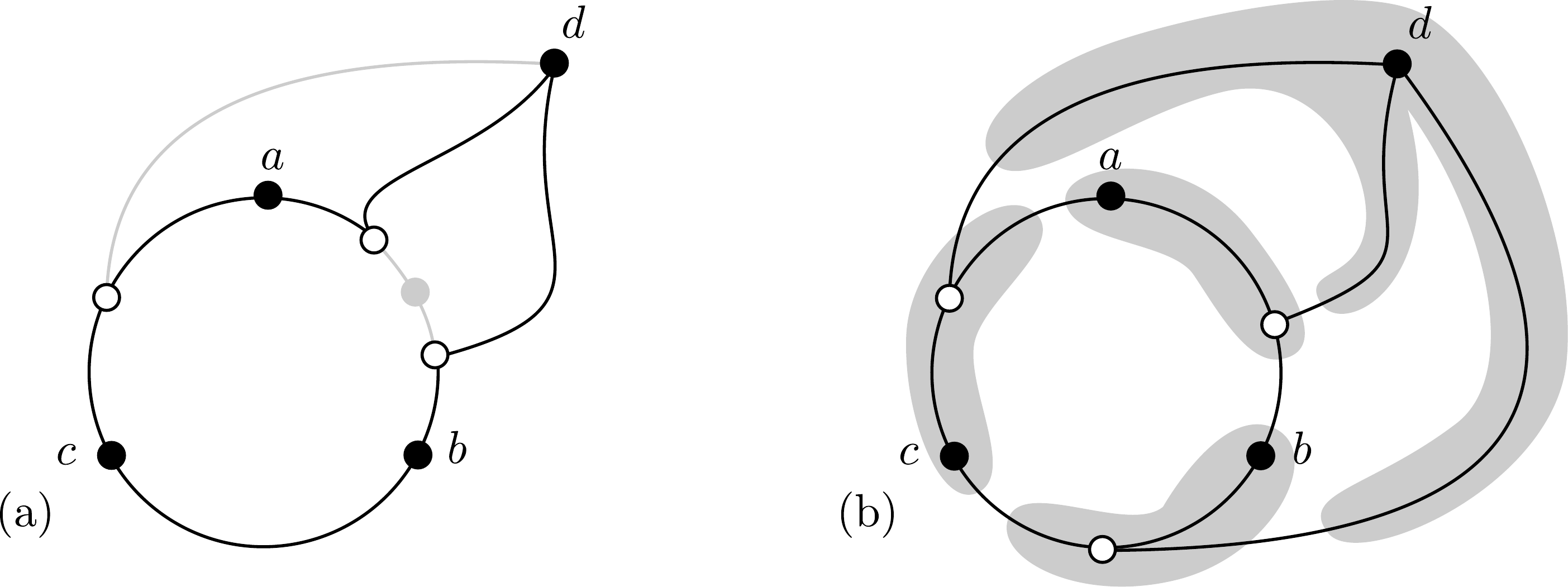}
  \end{center}
  \caption{\label{fig:3con} Finding a rooted $K_4$-minor in a
    3-connected graph.}
\end{figure}

Note that Theorem~\ref{thm:3con} does not hold for 2-connected
graphs. For example, $K_{2,3}$ with colour classes $\{a,b,c\}$ and
$\{d,v\}$ contains an $(ab,cd)$-linkage, an $(ac,bd)$-linkage, and an
$(ad,bc)$-linkage, but contains no $\{a,b,c,d\}$-minor.

Theorem~\ref{thm:3con} can be strengthened for 3-connected planar
graphs.

\begin{theorem}
  \label{thm:3ConPlanar}
  For distinct vertices $a,b,c,d$ in a 3-connected planar graph $G$,
  either:
  \begin{itemize}
  \item $G$ contains an $\{a,b,c,d\}$-minor, or
  \item $a,b,c,d$ are on a common face.
  \end{itemize}
\end{theorem}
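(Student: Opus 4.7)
I would derive the theorem from Theorem~\ref{thm:3con}. The direction ``common face $\Rightarrow$ no $\{a,b,c,d\}$-minor'' is straightforward: if $a,b,c,d$ lie on a common face $F$ of $G$ in cyclic order $(a,b,c,d)$, then any $ac$-path together with the $ac$-arc of $\partial F$ through $b$ bounds a region of the plane separating $b$ from $d$, so no $(ac,bd)$-linkage exists, and Theorem~\ref{thm:3con} precludes an $\{a,b,c,d\}$-minor.

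For the converse, suppose $G$ contains no $\{a,b,c,d\}$-minor. Theorem~\ref{thm:3con} gives that $G$ is a spanning subgraph of some $\{a,b,c,d\}$-web $H^+$; write $H^+$ as an $(a_1,a_2,a_3,a_4)$-web where $(a_1,\ldots,a_4)$ is a permutation of $(a,b,c,d)$. So $H$ is planar with outerface the $4$-cycle $(a_1,a_2,a_3,a_4)$ and all internal faces triangular, and $H^+$ is obtained by inserting, inside each triangle $T$ of $H$, a clique $X_T$ joined completely to $T$. I aim to show that $a_1,\ldots,a_4$ lie on a common face of $G$. The key observation is this: for each triangle $T$ of $H$ with $X_T\neq\emptyset$, every vertex of $X_T$ has all its $G$-neighbours in $T\cup X_T$, while $|V(H)\setminus T|\geq 1$, so $T$ is a $3$-cut of $G$ separating $X_T$ from the rest. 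By Whitney's theorem $G$ has a unique planar embedding in which each such $3$-cut is realised by a Jordan curve through $T$ confining $X_T$ to a ``pocket''. Peeling off the pockets yields $G[V(H)]\subseteq H$, whose inherited embedding keeps $a_1,\ldots,a_4$ on the face descended from the outer face of $H$ (possibly enlarged if $G$ omits some edges of $H$). Reinserting the pockets, which sit inside non-outer triangles of $H$, does not disturb this face, so $a,b,c,d$ lie on a common face of $G$, as required.

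The main obstacle is making the ``peel and reinsert'' step fully rigorous, particularly when $T$ fails to be a $3$-cycle of $G$ (because some edge of $T$ is absent in $G$) and the correspondence between $3$-cuts of $G$ and triangles of $H$ becomes less direct. I would handle this by induction on the number of triangles $T$ with $X_T\neq\emptyset$, at each step invoking Whitney's uniqueness and the correspondence between small vertex cuts and separating Jordan curves to track how the face containing $a_1,\ldots,a_4$ evolves.
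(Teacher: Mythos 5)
Your overall strategy---apply Theorem~\ref{thm:3con} to conclude that $G$ is a spanning subgraph of some $\{a,b,c,d\}$-web $H^+$, and then show that this forces $a,b,c,d$ onto a common face of $G$---is the same as the paper's, and your treatment of the easy direction is acceptable (the paper instead notes that a common face makes $G$ a spanning subgraph of a web and applies Lemma~\ref{lem:noweb}). The gap is in the ``peel and reinsert'' step, which is where all the content of the proof lives and which you yourself flag as the obstacle. Two claims there are asserted without justification and are not routine. First, that each pocket $G[X_T\cup T]$ can be drawn inside a disc meeting the rest of the drawing only at $u,v,w$: this is a statement about bridges of $3$-cuts in $3$-connected planar graphs and needs an argument (note that $H^+$ itself is typically non-planar when $|X_T|\ge 2$, so the web structure does not hand you such an embedding). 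Second, and more seriously, that reinsertion ``does not disturb'' the face containing $a_1,\dots,a_4$: the faces of $G[V(H)]$ in the embedding inherited from $H$ need not correspond to faces of $H$, since $G$ may omit edges of $H$; the face of $G[V(H)]$ containing the interior of an internal triangle $T$ can coincide with the face containing the outer $4$-cycle, and then attaching a pocket to all of $u,v,w$ may separate the $a_i$ from one another, depending on the cyclic order of $u,v,w,a_1,\dots,a_4$ on that face's boundary. Your construction proves the theorem only if it always succeeds in producing an embedding with $a,b,c,d$ on one face; if it gets stuck you have shown nothing, because $G$ still has its own embedding about which you have said nothing.

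The paper closes exactly this gap with a mechanism your proposal lacks: it never deletes the cliques $X_T$, but absorbs them into the planar graph $H$. The enabling observation is that at most one vertex $x\in X_T$ can be adjacent in $G$ to all of $u,v,w$---otherwise two such vertices $x,y$, together with a vertex $z$ of $H$ outside $T$ reached from $x$ by three internally disjoint paths through $u,v,w$ (using $3$-connectivity), give a subdivision of $K_{3,3}$, contradicting the planarity of $G$. That single vertex is moved into $H$, splitting the face $uvw$ into three triangles; every remaining vertex of $X_T$ is adjacent to at most two of $u,v,w$ (plus possibly $x$) and is reassigned to one of the new sub-triangles, so the process can be iterated until every $X_T$ is empty. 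Then $G$ is a spanning subgraph of a planar graph with $a,b,c,d$ on its outerface, and Whitney's uniqueness of embeddings finishes the proof. To salvage your topological route you would need to supply the bridge/Jordan-curve argument and the boundary-order case analysis described above; the $K_{3,3}$ argument is the cleaner way to control the pockets.
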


\begin{proof}
  If $a,b,c,d$ are on a common face, then $G$ is a spanning subgraph
  of an $\{a,b,c,d\}$-web; thus $G$ contains no $\{a,b,c,d\}$-minor by
  Lemma~\ref{lem:noweb}.  For the converse, assume that $G$ contains no
  $\{a,b,c,d\}$-minor. By Theorem~\ref{thm:3con}, $G$ is a spanning
  subgraph of $H^+$ for some planar graph $H$ with outerface
  $\{a,b,c,d\}$, such that every internal face of $H$ is a triangle.
  Suppose that for some triangular face $T=(u,v,w)$ of $H$, at least
  two vertices $x,y\in X_T$ are adjacent in $G$ to each of
  $u,v,w$. Let $z$ be a vertex of $H$ outside of $T$. There is such a
  vertex since the outerface has four vertices. Since $G$ is
  3-connected, there are three internally disjoint $xz$-paths,
  respectively passing through $u,v,w$. Thus $G$ contains a
  subdivision of $K_{3,3}$ with colours classes $\{u,v,w\}$ and
  $\{x,y,z\}$. This contradiction proves that for each triangular face
  $T=(u,v,w)$ of $H$, at most one vertex in $X_T$ is adjacent to each
  of $u,v,w$ in $G$. If there is such a vertex $x\in X_T$ then move
  $x$ into $H$. Observe that $H$ remains planar: the face $uvw$ is
  replaced by the faces $T_w=(u,v,x)$, $T_v=(u,w,x)$ and
  $T_u=(v,w,x)$. Each remaining vertex in $X_T$ is now adjacent to at
  most two of $u,v,w$ (and possibly $x$). Assign such a vertex to one
  of $X_{T_u},X_{T_v},X_{T_w}$ according to its neighbours in
  $T$. Repeat this step until $X_T=\emptyset$ for each triangle $T$ of
  $H$.  In this case, $G$ is a spanning subgraph of $H$ (not $H^+$),
  and $a,b,c,d$ are on a common face of $G$.
\end{proof}

\begin{corollary}
  A planar triangulation contains an $\{a,b,c,d\}$-minor for all
  distinct vertices $a,b,c,d$.
\end{corollary}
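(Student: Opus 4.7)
The plan is simply to invoke Theorem~\ref{thm:3ConPlanar} and observe that the second alternative is ruled out by the face structure of a triangulation. So I first need to recall the two prerequisites: a planar triangulation is 3-connected (assuming at least 4 vertices, which is forced here since we are given four distinct vertices $a,b,c,d$), and every face of a planar triangulation is bounded by a triangle, hence contains only three vertices.

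With those in hand, the argument runs as follows. Given distinct vertices $a,b,c,d$ in a planar triangulation $G$, Theorem~\ref{thm:3ConPlanar} supplies one of two conclusions: either $G$ has an $\{a,b,c,d\}$-minor, or $a,b,c,d$ lie on a common face. The second possibility would require a face of $G$ to contain at least four distinct vertices, but in a triangulation every face has exactly three. Hence only the first outcome can occur, giving the desired $\{a,b,c,d\}$-minor.

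There is essentially no obstacle: the only thing to justify carefully is the 3-connectivity of a planar triangulation on at least four vertices (standard, e.g.\ via Steinitz's theorem, or directly from the observation that removing two vertices from a triangulation leaves a connected graph because the boundary cycle of any remaining face keeps things connected). If one wanted a fully self-contained write-up, I would either cite a textbook fact or sketch this one-line observation; otherwise it can be taken as folklore.
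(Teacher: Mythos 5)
Your proposal is correct and matches the intended (unstated) derivation in the paper: a planar triangulation on at least four vertices is $3$-connected and every face is a triangle, so the second alternative of Theorem~\ref{thm:3ConPlanar} cannot hold for four distinct vertices. Nothing further is needed.
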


\section{Reductions}

This section describes a number of operations that simplify the search
for rooted $K_4$-minors. The first motivates the definition of $H^+$.

\begin{lemma}
  \label{lem:KillCliques}
  Let $a,b,c,d$ be distinct vertices in a graph $H$. For each graph
  $H^+$, we have $H^+$ contains an $\{a,b,c,d\}$-minor if and only if
  $H$ contains an $\{a,b,c,d\}$-minor.
\end{lemma}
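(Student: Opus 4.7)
The reverse implication is immediate since $H$ is a subgraph of $H^+$, so any $\{a,b,c,d\}$-minor of $H$ is also one of $H^+$. The real content is the forward direction. My plan is to start with an $\{a,b,c,d\}$-minor $\{A,B,C,D\}$ in $H^+$ and iteratively delete from the branch sets every vertex of $\bigcup_T X_T$, preserving the property of being an $\{a,b,c,d\}$-minor at each step, until the resulting minor lies entirely in $H$.

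The key structural fact I will exploit is that for each triangle $T$ of $H$ the set $T\cup X_T$ induces a clique in $H^+$, and every $H^+$-neighbour of a vertex of $X_T$ lies in $T\cup X_T$. Suppose that some $v\in X_T$ lies in a branch set, say $A$. Since $A$ is connected and contains the root $a\in V(H)\setminus X_T$, any $A$-path from $v$ to $a$ must exit $X_T$, and can do so only through a vertex of $T$. Hence $A\cap T$ is nonempty, and I may pick any $u\in A\cap T$.

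I claim that replacing $A$ by $A':=A\setminus\{v\}$ and leaving $B,C,D$ untouched still yields an $\{a,b,c,d\}$-minor. For connectedness of $A'$: any walk in the induced subgraph on $A$ that passes through $v$ uses two neighbours $x,y$ of $v$ inside $A$, both necessarily in $T\cup X_T$, so the subwalk $x\to v\to y$ can be rerouted as $x\to u\to y$ via the clique edges $xu$ and $uy$. For the external adjacencies: any edge from $v$ to a vertex $w\in B\cup C\cup D$ satisfies $w\in T\cup X_T$, so $uw$ is an edge of $H^+$ as well, and $u\in A'$ witnesses the same adjacency.

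Iterating this operation deletes every vertex of $\bigcup_T X_T$ from the branch sets, while no root $a,b,c,d$ is ever removed because the roots all lie in $V(H)$. The resulting branch sets are contained in $V(H)$ and still form an $\{a,b,c,d\}$-minor, now in $H$. The main point to check is that a single deletion simultaneously preserves connectedness of the branch set and all three of its external adjacencies; but both follow uniformly from the clique structure on $T\cup X_T$, so no finer case analysis is required.
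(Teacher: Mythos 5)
Your proof is correct and rests on exactly the same two observations as the paper's: that $T$ separates the roots from $X_T$ (so a branch set meeting $X_T$ also meets $T$), and that the clique on $T\cup X_T$ allows connectivity and inter-branch-set adjacencies to be rerouted through $T$. The paper simply restricts each branch set to $V(H)$ in one step rather than deleting the $X_T$-vertices one at a time, so the approaches are essentially identical.
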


\begin{proof}
  Since $H$ is a subgraph of $H^+$, if $H$ contains an
  $\{a,b,c,d\}$-minor then so does $H^+$. For the converse, say
  $A,B,C,D$ is a $K_4$-minor in $H^+$ rooted at $a,b,c,d$. Let $A':=
  A\cap H$. Define $B',C',D'$ similarly. Suppose that $A'$ intersects
  the clique $X_T$ associated with some triangle $T$ of $H$.  Since
  $T$ separates $a$ and $X_T$, $A'$ intersects $T$. Since the vertices
  in $A\cap T$ are pairwise adjacent, $A\cap H$ is a connected
  subgraph of $H$. If two branch sets, say $A$ and $B$, are adjacent
  in $X_T$, then they both contain a vertex in $T$, and $A'$ and $B'$
  are adjacent in $H$. Thus $A',B',C',D'$ is a $K_4$-minor in $H$
  rooted at $a,b,c,d$.
\end{proof}

A \emph{separation} of a graph $G$ is an ordered pair $(G_1,G_2)$ of
subgraphs of $G$ such that $G=G_1 \bigcup G_2$, and
$G_1\not\subseteq G_2$ and $G_2\not\subseteq G_1$. So there
is no edge between $G_1-G_2$ and $G_2-G_1$. The \emph{order} of
$(G_1,G_2)$ is $|V(G_1\cap G_2)|$. If certain vertices in $G$ are
nominated, and there are $s$ nominated vertices in $G_1$ and $t$
nominated vertices in $G_2$, then $(G_1,G_2)$ is an
\emph{$(s,t)$-separation}.

\begin{lemma}
  \label{lem:22}
  Let $a,b,c,d$ be four nominated vertices in a 2-connected graph $G$.
  Let $(G_1,G_2)$ be a $(2,2)$-separation of $G$ of order 2, such that
  $a,b \in V(G_1)$ and $c,d \in V(G_2)$. Let $\{u,v\}:=V(G_1)\cap
  V(G_2)$. Let $G_i'$ be the graph obtained from $G_i$ by adding the
  edge $uv$. Then $G$ contains an $\{a,b,c,d\}$-minor if and only if
  $G_1'$ contains an $\{a,b,u,v\}$-minor or $G_2'$ contains a
  $\{u,v,c,d\}$-minor.
\end{lemma}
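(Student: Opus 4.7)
\emph{Backward direction.} By symmetry, assume $G_1'$ has an $\{a,b,u,v\}$-minor with branch sets $A,B,U,V$ (with $u\in U$, $v\in V$). Since $G$ is 2-connected, so is $G_2'=G_2+uv$; applying Menger's Theorem between the disjoint pairs $\{u,v\}$ and $\{c,d\}$ in $G_2'$ produces two vertex-disjoint $\{u,v\}$-$\{c,d\}$ paths in $G_2$---the paths cannot use the added edge $uv$, since this would force one path through both $u$ and $v$, violating disjointness---yielding (without loss of generality) a $(uc,vd)$-linkage $P_{uc},P_{vd}$ in $G_2$. If $P_{uc}$ and $P_{vd}$ are not adjacent in $G_2$, then by the connectivity of $G_2$ there is a path $Q$ from $V(P_{uc})$ to $V(P_{vd})$ with internal vertices outside $V(P_{uc})\cup V(P_{vd})$; absorbing these internal vertices into $V(P_{uc})$ gives disjoint connected subgraphs $X\supseteq\{u,c\}$ and $Y\supseteq\{v,d\}$ of $G_2$ that are adjacent in $G_2$. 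Setting $A':=A$, $B':=B$, $C':=U\cup X$, $D':=V\cup Y$ yields the required $\{a,b,c,d\}$-minor of $G$: each branch set is connected and contains the correct root, and the four are pairwise disjoint (since $A,B\subseteq V(G_1)\setminus\{u,v\}$ while $X,Y\subseteq V(G_2)$); the pairwise adjacencies $A$-$B$, $A$-$U$, $A$-$V$, $B$-$U$, $B$-$V$ are inherited from the $G_1'$-minor and use no $uv$-edge (since $uv$'s endpoints lie in $U,V$, not in $A,B$), so they survive in $G$, and $C'$-$D'$ is supplied by the $X$-$Y$ edge in $G_2$.

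\emph{Forward direction.} Suppose $G$ has an $\{a,b,c,d\}$-minor with branch sets $A,B,C,D$; by augmenting, we may assume they partition $V(G)$. Since $\{u,v\}$ separates $V(G_1)\setminus\{u,v\}$ from $V(G_2)\setminus\{u,v\}$, any branch set meeting both sides contains $u$ or $v$. We case-split on where $u,v$ lie. In the key case $u\in A$, $v\in B$, the branch sets $C,D$ are forced into $V(G_2)\setminus\{u,v\}$; the restrictions $A_2:=A\cap V(G_2)$ (containing $u$) and $B_2:=B\cap V(G_2)$ (containing $v$) are connected in $G_2'$, because any path in $A$ between two $V(G_2)$-vertices that detours through $V(G_1)\setminus\{u,v\}$ enters and exits at $\{u,v\}$ and can be shortcut by the added edge $uv$. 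Then $A_2,B_2,C,D$ form the desired $\{u,v,c,d\}$-minor of $G_2'$, with every original adjacency preserved (edges crossing into $V(G_1)\setminus\{u,v\}$ are replaced by the edge $uv$ since $u\in A_2$, $v\in B_2$). The symmetric sub-case $u\in C$, $v\in D$ produces an $\{a,b,u,v\}$-minor of $G_1'$.

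The main obstacle is the remaining sub-cases of the forward direction: $u$ and $v$ in branch sets on opposite sides of the separation, or both in the same branch set. In each such sub-case, a naive restriction yields fewer than four root-separated branch sets on one side, so we must split the straddling branch set(s) into connected pieces, one per root, using Lemma~\ref{lem:K3} applied inside the branch set together with the 2-connectedness of $G$ to exclude the degenerate separator obstruction of Lemma~\ref{lem:K3}. Ensuring the pairwise adjacencies among the split pieces in $G_i'$ then requires careful tracking through the added edge $uv$.
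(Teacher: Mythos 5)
Your backward direction is complete and correct: extending the branch sets $U$ and $V$ into $G_2$ via a Menger linkage plus one connecting path is a fleshed-out version of the paper's one-line observation that $G_1'$ (respectively $G_2'$) is obtained from $G$ by contracting the other side of the separation onto the edge $uv$. The key case of your forward direction ($u\in A$, $v\in B$) is also handled correctly.

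The gap is in what you leave unresolved. The ``remaining sub-cases'' that you flag as the main obstacle --- $u$ and $v$ in branch sets rooted on opposite sides of the separation, or both in the same branch set --- simply do not occur, so no splitting of branch sets via Lemma~\ref{lem:K3} is needed (and that strategy is the wrong tool: you would also have to manufacture adjacencies between the new pieces, which Lemma~\ref{lem:K3} does not provide). Concretely: in a $(2,2)$-separation none of $a,b,c,d$ lies in $\{u,v\}$, so any branch set avoiding both $u$ and $v$ is trapped inside $V(G_i)\setminus\{u,v\}$ for the side $i$ containing its root, and there is no edge of $G$ between $V(G_1)\setminus V(G_2)$ and $V(G_2)\setminus V(G_1)$. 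Hence if $u\in A$ and $v\in C$, then $B\subseteq V(G_1)\setminus\{u,v\}$ and $D\subseteq V(G_2)\setminus\{u,v\}$ cannot be adjacent; if $u,v\in A$, then $B\subseteq V(G_1)\setminus\{u,v\}$ cannot be adjacent to $C\subseteq V(G_2)\setminus\{u,v\}$; and symmetrically for all other placements. This two-line contradiction is exactly how the paper dispatches these cases (``$v$ separates $b$ from $\{c,d\}$ in $G-A$, hence $v\in B$''). With that observation inserted your argument closes; as written, the forward direction is incomplete and the proposed repair heads in the wrong direction.
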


\begin{proof}
  Since $G$ is 2-connected, $G_2'$ can obtained from $G$ by
  contracting $G_1$ onto the edge $uv$, and $G_1'$ can obtained from
  $G$ by contracting $G_2$ onto $uv$. Thus, if $G_1'$ contains an
  $\{a,b,u,v\}$-minor or $G_2'$ contains a $\{u,v,c,d\}$-minor, then
  $G$ contains an $\{a,b,c,d\}$-minor. For the converse, assume that
  $G$ contains a $K_4$-minor $A,B,C,D$ containing $a,b,c,d$
  respectively. Grow the branch sets until $u$ and $v$ are in $A\cup
  B\cup C\cup D$. Without loss of generality, $u$ is in $A$. Thus $v$
  separates $b$ from $\{c,d\}$ in $G-A$. Hence $v$ is in
  $B$. Therefore $A\cap G_2,B\cap G_2,C,D$ is a $\{u,v,c,d\}$-minor of
  $G_2$.
\end{proof}

\begin{lemma}
  \label{lem:ear}
  Let $G$ be a graph with four nominated vertices $a,b,c,d$, such that
  $N_G(a)=N_G(b)=\{u,v\}$ for some vertices $u,v\in
  V(G)\setminus\{a,b,c,d\}$. Let $G'$ be the graph obtained from $G$
  by deleting $a$ and $b$, and adding the edge $uv$. Then $G$ contains
  an $\{a,b,c,d\}$-minor if and only if $G'$ contains a
  $\{u,v,c,d\}$-minor.
\end{lemma}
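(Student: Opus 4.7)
My plan is to prove both directions by transforming branch sets, exploiting the fact that $a$ and $b$ have degree $2$ in $G$ with common neighborhood $\{u,v\}$.

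For the reverse direction, given a $\{u,v,c,d\}$-minor $U',V',C',D'$ in $G'$, I would form the branch sets $U'\cup\{a\}$, $V'\cup\{b\}$, $C'$, $D'$ in $G$, rooted at $a,b,c,d$ respectively. Connectivity is immediate since $a\sim u\in U'$ and $b\sim v\in V'$. The only edge of $G'$ not in $G$ is $uv$, and it joins $U'$ to $V'$, so every adjacency other than $U'\sim V'$ is witnessed by an edge of $G-\{a,b\}\subseteq G$; the adjacency between $U'\cup\{a\}$ and $V'\cup\{b\}$ is supplied in $G$ by the edge $av$.

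For the forward direction, suppose $A,B,C,D$ is an $\{a,b,c,d\}$-minor in $G$. The crucial step is a short case analysis showing that, after swapping $u$ and $v$ if necessary, $u\in A$ and $v\in B$. Indeed, if both $u$ and $v$ lie in $A$, then $b$ has no neighbor outside $A$, forcing $B=\{b\}$; but then the two edges incident to $b$ cannot witness the three adjacencies $B\sim A$, $B\sim C$, $B\sim D$. If neither $u$ nor $v$ lies in $A$, a symmetric argument forces $A=\{a\}$ and $a$'s two edges still cannot supply three distinct adjacencies. The remaining mixed configurations (for instance $u\in A$ while $v\in C$, or $v$ lies in no branch set) all force $B=\{b\}$ and then fail to produce $B\sim D$. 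Hence the split $u\in A$, $v\in B$ is forced.

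With this split, $a$ is a leaf of $G[A]$ and $b$ is a leaf of $G[B]$, so $A-a$ and $B-b$ remain connected. I would then propose the branch sets $A-a$, $B-b$, $C$, $D$ in $G'$, rooted at $u,v,c,d$. The adjacency between the first two uses the added edge $uv$, while each of $A\sim C$, $A\sim D$, $B\sim C$, $B\sim D$ is already witnessed in $G$ by an edge not incident to $a$ or $b$: the only edges at $a$ or $b$ have their other endpoint in $\{u,v\}\subseteq A\cup B$, and so cannot contribute to an adjacency with $C$ or $D$. These surviving edges lie in $G-\{a,b\}\subseteq G'$, completing the construction. The main obstacle throughout is the case analysis pinning down the split of $\{u,v\}$ between $A$ and $B$; once that is in hand, the transformations of branch sets are essentially forced.
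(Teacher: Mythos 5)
Your proof is correct and follows essentially the same route as the paper: a case analysis pinning down (after relabelling) $u\in A$ and $v\in B$, followed by passing to the branch sets $A-a,B-b,C,D$ in $G'$, and conversely absorbing $a$ and $b$ into the branch sets of $u$ and $v$. Your explicit observation that the adjacency between the enlarged branch sets of $a$ and $b$ is witnessed by the edge $av$ (since $uv$ need not be an edge of $G$) is a detail the paper leaves implicit.
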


\begin{proof}
  If $G'$ contains a $\{u,v,c,d\}$-minor, then contracting the edges
  $au$ and $bv$ gives an $\{a,b,c,d\}$-minor in $G$. For the converse,
  say $A,B,C,D$ is a $K_4$-minor in $G$ respectively rooted at
  $a,b,c,d$. Grow the branch sets until $u$ and $v$ are in $A\cup
  B\cup C\cup D$. If $u$ is in $C$ then $v$ separates $\{a,b\}$ and
  $D$, implying $v$ is in $D$, in which case $A=\{a\}$ and $B=\{b\}$,
  and $A$ and $B$ are not adjacent. By symmetry, $\{u,v\}\cap(C\cup
  D)=\emptyset$. Thus $u,v\in A\cup B$. If $u,v\in A$ then $A$
  separates $b$ and $C\cup D$. Thus $u\in A$ and $v\in B$, without
  loss of generality. Hence $A-a,B-b,C,D$ is a $\{u,v,c,d\}$-minor in
  $G'$.
\end{proof}

%%%%%%%%%%%%%%%%%%%%%%%%%%
\section{Obstructions}

Consider the following classes of graphs, each of which contains no
$K_4$-minor rooted at the four nominated vertices. Each graph in each
class is called an \emph{obstruction}; see
Figure~\ref{fig:obstructions}.

\begin{description}
\item[\normalfont Class $\mathcal{A}$] Let $H$ be the graph consisting
  of an edge $pq$ with $p$ nominated, and three nominated vertices
  adjacent to both $p$ and $q$. Let $\mathcal{A}$ be the class of all
  graphs $H^+$.

\item[\normalfont Class $\mathcal{B}$] Let $H$ be the graph consisting
  of an edge $pq$, and four nominated vertices adjacent to both $p$
  and $q$. Let $\mathcal{B}$ be the class of all graphs $H^+$.

\item[\normalfont Class $\mathcal{C}$] Let $H$ be the graph consisting
  of a triangle $uvw$, plus two nominated vertices adjacent to $u$ and
  $v$, and two nominated vertices adjacent to $v$ and $w$.  Let
  $\mathcal{C}$ be the class of all graphs $H^+$.

\item[\normalfont Class $\mathcal{D}$] Let $H$ be a planar graph with
  an outerface of four nominated vertices, such that every internal
  face is a triangle, and every triangle is a face.  Let $\mathcal{D}$
  be the class of all graphs $H^+$. (These are the webs.)

\item[\normalfont Class $\mathcal{E}$] Let $H$ be a planar graph with
  outerface $(p,q,r,s)$ where $p$ and $q$ are nominated, every
  internal face is a triangle, and every triangle is a face. Add
  to $H$   two nominated vertices $v$ and $w$ adjacent to $r$ and $s$.  Let
  $\mathcal{E}$ be the class of all graphs $H^+$.

\item[\normalfont Class $\mathcal{F}$] Let $H$ be a planar graph with
  outerface $(p,q,r,s)$ where every other face is a triangle and every
  triangle is a face. Add to $H$ two nominated vertices
  adjacent to $p$ and $q$, and two nominated vertices
  adjacent to $r$ and $s$.  Let $\mathcal{F}$ be the class of all
  graphs $H^+$.

\end{description}

\begin{figure}
  \begin{center}
    \includegraphics[width=0.8\textwidth]{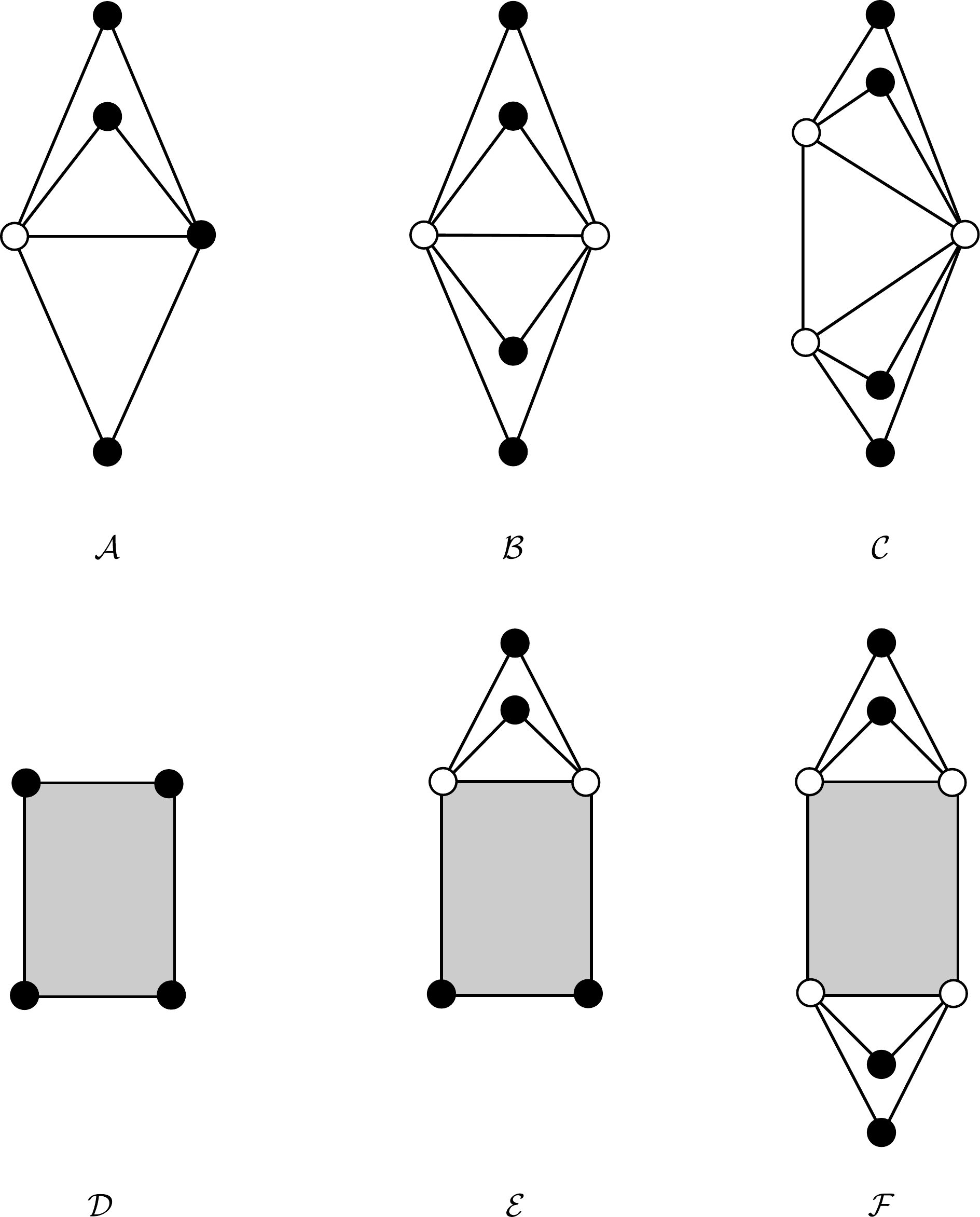}
  \end{center}
  \caption{\label{fig:obstructions} The obstructions. Nominated
    vertices are dark. Non-nominated vertices are white. Shaded
    regions represent a web. Adjacent to each triangle is an undrawn
    clique. }
\end{figure}

The \emph{type} of a nominated vertex $x$ in one of the above
obstructions $H^+$ is defined as follows:
\begin{description}
\item[\normalfont Type-1] $H^+\in \mathcal{D}\cup\mathcal{E}$, and $x$
  is adjacent to some other nominated vertex in $H$.

\item[\normalfont Type-2] $H^+\in\mathcal{A}$, and $x$ has
  degree 4 in $H$.

\item[\normalfont Type-3] $H^+\in
  \mathcal{A}\cup\mathcal{B}\cup\mathcal{C}\cup\mathcal{E}\cup\mathcal{F}$,
  and $x$ is neither type-1 nor type-2; such a vertex $x$ has degree 2 in
  $H$,

\end{description}

\begin{lemma}
  \label{lem:obstructions}
  Every graph in
  $\mathcal{A}\cup\mathcal{B}\cup\mathcal{C}\cup\mathcal{D}\cup\mathcal{E}\cup\mathcal{F}$
  contains no $K_4$-minor rooted at the four nominated vertices.
\end{lemma}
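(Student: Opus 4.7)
My plan is to handle the six classes one at a time, beginning with a uniform first step: by Lemma~\ref{lem:KillCliques}, $H^+$ contains a $K_4$-minor rooted at the four nominated vertices iff the base graph $H$ does, so the $X_T$-cliques can be ignored and I work with $H$ in each class. Class $\mathcal{D}$ is then exactly the content of Lemma~\ref{lem:noweb} and needs nothing further.

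For classes $\mathcal{E}$ and $\mathcal{F}$ I would use Lemma~\ref{lem:ear} to peel off the nominated vertices attached outside the planar web. In $\mathcal{E}$, the two nominated vertices $v,w$ share neighborhood $\{r,s\}$, where $r,s$ are non-nominated; in $\mathcal{F}$, each of the two added pairs of nominated vertices has a non-nominated two-vertex common neighborhood on the outer face ($\{p,q\}$ and $\{r,s\}$ respectively). The edge that Lemma~\ref{lem:ear} introduces is already present on the outer face in each case, so one application in $\mathcal{E}$ and two in $\mathcal{F}$ collapse the graph to the underlying web with its four outerface corners nominated; Lemma~\ref{lem:noweb} (equivalently, class $\mathcal{D}$) then concludes.

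For the small classes $\mathcal{A}, \mathcal{B}, \mathcal{C}$ I would argue directly by counting how many branch sets can absorb the handful of non-nominated ``hub'' vertices. Class $\mathcal{B}$ collapses under Lemma~\ref{lem:ear} (applied to any two of the four nominated vertices, all of which share neighborhood $\{p,q\}$) to a four-vertex graph with two nominated vertices non-adjacent, immediately ruling out a rooted $K_4$-minor. In $\mathcal{A}$, the unique non-nominated hub $q$ can lie in at most one of the three leaf branch sets, forcing the other two to be singletons whose only neighbors $\{p,q\}$ have already been claimed and thus cannot be mutually adjacent. Class $\mathcal{C}$ is the most intricate: after one application of Lemma~\ref{lem:ear} to $\{a,b\}$ the instance becomes the triangle $uvw$ together with two pendants $c,d$ at $\{v,w\}$ and nominated set $\{u,v,c,d\}$, and a short case split on which branch set contains the lone remaining non-nominated vertex $w$ (including the possibility that $w$ lies in no branch set) contradicts the required four-way adjacency in each case. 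The main subtlety — and the reason class $\mathcal{C}$ needs a direct finish rather than a second application of Lemma~\ref{lem:ear} — is that the reduction relabels the two common neighbors as nominated vertices of the reduced instance, so it cannot always be iterated blindly.
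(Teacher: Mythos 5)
Your proposal is correct and follows essentially the same route as the paper: strip the cliques via Lemma~\ref{lem:KillCliques}, invoke Lemma~\ref{lem:noweb} for class $\mathcal{D}$, peel off degree-2 nominated pairs with Lemma~\ref{lem:ear}, and finish the small base cases directly. The only cosmetic difference is that the paper observes the reduced class-$\mathcal{C}$ instance is literally a class-$\mathcal{A}$ instance (and chains $\mathcal{F}\to\mathcal{E}\to\mathcal{D}$), whereas you redo the hub-vertex case analysis by hand; both are fine.
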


\begin{proof}
Lemma~\ref{lem:noweb} implies the result for a class $\mathcal{D}$
obstruction. Let $H^+$ be an
  obstruction in some other class. By Lemma~\ref{lem:KillCliques}, it
  suffices to prove that $H$ contains no $\{a,b,c,d\}$-minor, where
  $a,b,c,d$ are the four nominated vertices.

  If $H^+\in\mathcal{A}$ then $H\cong K_{1,1,3}$, in which case
  contracting an edge incident to the one non-nominated vertex produces
  $K_4-e$ or $K_{1,3}$, neither of which are $K_4$.

  For
  $H^+\in\mathcal{B}\cup\mathcal{C}\cup\mathcal{E}\cup\mathcal{F}$,
  Lemma~\ref{lem:ear} is applicable. In particular,
  $N_H(a)=N_H(b)=\{u,v\}$ for some vertices $u,v\in
  V(H)\setminus\{a,b,c,d\}$. Thus if $H'$ is the graph obtained from
  $H$ by deleting $a$ and $b$, and adding the edge $uv$, then $H^+$
  contains an $\{a,b,c,d\}$-minor if and only if $H$ contains an
  $\{a,b,c,d\}$-minor if and only if $H'$ contains a
  $\{u,v,c,d\}$-minor.

  If $H^+\in\mathcal{B}$ then $H'\cong K_4-e$.  Thus in each case,
  $H'$ contains no $\{u,v,c,d\}$-minor, implying that $H$ contains no
  $\{a,b,c,d\}$-minor.  If $H^+\in \mathcal{C}$ then
  $H'\in\mathcal{A}$, which has no $\{u,v,c,d\}$-minor as proved
  above.  If $H^+\in \mathcal{E}$ then $H'\in\mathcal{D}$, which has
  no $\{u,v,c,d\}$-minor by Lemma~\ref{lem:noweb}.  If $H^+\in
  \mathcal{F}$ then $H'\in\mathcal{E}$, which has no
  $\{u,v,c,d\}$-minor as proved above.
\end{proof}

\section{Main Theorem}
\label{sec:Main}

We now state and prove the main result of the paper.  It characterises
when a given graph contains a $K_4$-minor rooted at four nominated
vertices.

\begin{theorem}
  \label{thm:Main}
  For every graph $G$ with four nominated vertices, either:
  \begin{itemize}
  \item $G$ contains a $K_4$-minor rooted at the nominated vertices,
    or
  \item $G$ is a spanning subgraph of a graph in
    $\mathcal{A}\cup\mathcal{B}\cup\mathcal{C}\cup\mathcal{D}\cup\mathcal{E}\cup\mathcal{F}$
  \end{itemize}
\end{theorem}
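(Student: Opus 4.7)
The backward implication is immediate from Lemma~\ref{lem:obstructions}. For the forward direction, I induct on $|V(G)|+|E(G)|$, assuming $G$ has no $\{a,b,c,d\}$-minor. Since being a spanning subgraph of an obstruction is preserved under deleting edges, I may pass to an edge-maximal such $G$.

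A first reduction brings $G$ to a $2$-connected graph. If $G$ is disconnected or has a cutvertex, Lemma~\ref{lem:K3} applied to triples from $\{a,b,c,d\}$ constrains where the nominated vertices can lie across the components or blocks; edge-maximality then forces a single ``heavy'' piece to carry all four nominated vertices, and the remaining vertices are absorbed as clique vertices $X_T$ of a triangular face of the obstruction obtained by induction on that piece.

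If $G$ is $3$-connected, Theorem~\ref{thm:3con} immediately gives $G$ as a spanning subgraph of an $\{a,b,c,d\}$-web, i.e.\ a Class~$\mathcal{D}$ obstruction. Otherwise $G$ has a $2$-separation $(G_1,G_2)$ of order $2$ with $V(G_1)\cap V(G_2)=\{u,v\}$, and I split into subcases according to how $\{a,b,c,d\}$ meets $V(G_1),V(G_2),\{u,v\}$. The principal subcase is a $(2,2)$-separation with $\{u,v\}\cap\{a,b,c,d\}=\emptyset$, say $a,b\in V(G_1)$ and $c,d\in V(G_2)$: Lemma~\ref{lem:22} then says that the pieces $G_i$ plus edge $uv$ respectively admit no $\{a,b,u,v\}$-minor and no $\{u,v,c,d\}$-minor, so by induction each is a spanning subgraph of some obstruction $H_i^+$.

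The main obstacle is the combinatorial check that gluing $H_1^+$ and $H_2^+$ along $\{u,v\}$ recovers a spanning supergraph of $G$ lying in one of Classes~$\mathcal{A}$ through $\mathcal{F}$. Because $uv$ is an edge between two nominated vertices in each $H_i^+$, only Classes $\mathcal{A}$, $\mathcal{D}$, $\mathcal{E}$—the ones containing an adjacent pair of nominated vertices—can arise for each piece; Classes $\mathcal{E}$ and $\mathcal{F}$ are designed precisely to absorb these gluings (a web joined to a satellite pair, or two such pieces joined back-to-back across an edge). The remaining subcases, where the separator meets $\{a,b,c,d\}$ or the nominated vertices split as $(3,1)$, are handled by analogous reductions using Lemma~\ref{lem:ear}, and produce obstructions in Classes $\mathcal{A}$, $\mathcal{B}$, $\mathcal{C}$. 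Throughout, Lemma~\ref{lem:KillCliques} lets me check for rooted minors in the core graph $H$ rather than in $H^+$.
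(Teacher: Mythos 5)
Your overall architecture matches the paper's: prove the backward direction by Lemma~\ref{lem:obstructions}, induct with edge\nobreakdash-maximality, clear away low-order separations, glue obstructions across a $(2,2)$-separation via Lemma~\ref{lem:22}, and finish the $3$-connected case with Theorem~\ref{thm:3con}. The $(2,2)$-gluing analysis you sketch (only classes $\mathcal{A}$, $\mathcal{D}$, $\mathcal{E}$ can contain the adjacent nominated pair $u,v$; classes $\mathcal{B}$, $\mathcal{C}$, $\mathcal{E}$, $\mathcal{F}$ arise as the glued products) is exactly the paper's case check. However, there is a genuine gap in your treatment of the $(1,3)$-separation of order $2$, which is in fact the hardest case of the whole proof and the one the paper devotes the most machinery to. Your claim that this split is ``handled by analogous reductions using Lemma~\ref{lem:ear}'' fails: Lemma~\ref{lem:ear} requires \emph{two} nominated vertices with the identical neighbourhood $\{u,v\}$, whereas here a single nominated vertex $a$ sits alone on one side of the separator. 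Replacing $G_1$ by a degree-$2$ vertex $a'$ on a new edge $uv$ does reduce the case $|V(G_1)|\geq 4$ (or $G_1\not\cong K_3$), but the irreducible residue is $G_1\cong K_3$: a nominated vertex $a$ of degree $2$ whose two neighbours $u,v$ are adjacent. There no local reduction is equivalence-preserving, because $G$ has an $\{a,b,c,d\}$-minor if and only if $G_u$ has a $\{u,b,c,d\}$-minor \emph{or} $G_v$ has a $\{v,b,c,d\}$-minor, so learning that $G_u$ is an obstruction decides nothing about $G$. The paper resolves this by first eliminating order-$3$ separations with an empty side (via Lemma~\ref{lem:K3}), deducing that the relevant obstruction is in class $\mathcal{D}$ with trivial cliques so that $G$ is planar, passing to the $3$-connected planar core $G^*=G-S$ with its unique embedding, and invoking Theorem~\ref{thm:3ConPlanar} to either find the rooted minor or certify that $a,b,c,d$ can be placed on a common face. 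None of this appears in your proposal, and without it the induction does not close.

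A secondary issue: your induction measure $|V(G)|+|E(G)|$ is incompatible with the reductions you need. Several steps (adding the edge $uv$ to a piece, or replacing $G_1$ by a new vertex $a'$ plus edges) keep $|V|$ fixed while \emph{increasing} $|E|$, so $|V|+|E|$ does not decrease. The paper's lexicographic choice ($|V(G)|$ minimum, then $|E(G)|$ \emph{maximum}) is what makes these steps legitimate; you should adopt that ordering rather than edge-maximality layered on top of a sum.
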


\begin{proof}
  Lemma~\ref{lem:obstructions} proves that both outcomes are not
  simultaneously possible. Suppose on the contrary that for some graph
  $G$ neither outcome occurs. That is, $G$ contains no $K_4$-minor
  rooted at the nominated vertices, and $G$ is not a spanning subgraph
  of a graph in
  $\mathcal{A}\cup\mathcal{B}\cup\mathcal{C}\cup\mathcal{D}\cup\mathcal{E}\cup\mathcal{F}$. Choose
  $G$ firstly with $|V(G)|$ minimum, and then with $|E(G)|$
  maximum. Let $a,b,c,d$ be the nominated vertices in $G$.  If
  $|V(G)|=4$ then $G$ contains an $\{a,b,c,d\}$-minor if and only if
  $G\cong K_4$. Otherwise, $G$ is a subgraph of $K_4$ minus an edge,
  which is in class $\mathcal{D}$. Now assume that $|V(G)|\geq5$ and
  the result holds for every graph $G'$ with $|V(G')|<|V(G)|$, or
  $|V(G')|=|V(G)|$ and $|E(G')|>|E(G)|$. We proceed by considering the
  possible separations in $G$.

  \begin{itemize}
  \item Suppose there is a $(0,4)$-separation $(G_1,G_2)$ of order 0:
    If $G_2$ contains a $K_4$-minor rooted at the nominated vertices,
    then so does $G$. Otherwise, by the choice of $G$, $G_2$ is a
    spanning subgraph of an obstruction $H^+$. Adding $V(G_1)$ to
    $X_T$ for some triangle $T$ of $H$, we obtain an obstruction
    containing $G$ as a spanning subgraph, as desired.

  \item Suppose there is a $(1,3)$-separation $(G_1,G_2)$ of order 0:
    Let $a$ be the nominated vertex in $G_1$. Let $b,c,d$ be the
    nominated vertices in $G_2$. Thus $G$ contains no $ab$-path. Hence
    $G$ contains no $\{a,b,c,d\}$-minor. Let $H:=K_4-ad$ with
    $V(H):=\{a,b,c,d\}$. Let $X_{abc}:=V(G_1)\setminus\{a\}$ and
    $X_{bcd}:=V(G_2)\setminus\{b,c,d\}$. Hence $G$ is a spanning
    subgraph of $H^+$, a class $\mathcal{D}$ obstruction.

  \item Suppose there is a $(2,2)$-separation $(G_1,G_2)$ of order 0:
    Then as in the proof of the previous case, $G$ contains no
    $\{a,b,c,d\}$-minor and $G$ is a spanning subgraph of a class
    $\mathcal{D}$ obstruction.

  \end{itemize}

  Now assume that $G$ is connected.

  \begin{itemize}

  \item Suppose that $(G_1,G_2)$ is a $(0,4)$-separation of order $1$:
    Let $\{u\}:=V(G_1\cap G_2)$. If $G_2$ contains an
    $\{a,b,c,d\}$-minor then so does $G$, and we are done. Otherwise,
    by the choice of $G$, $G_2$ is a spanning subgraph of an
    obstruction $H^+$. Now, $u$ is in $T\cup X_T$ for some triangle
    $T$ of $H$. Add $V(G_1)\setminus\{u\}$ to $X_T$. The resulting
    graph $H^+$ is in the same class as the original $H^+$ and
    contains $G$ as a spanning subgraph.

  \item Suppose that $(G_1,G_2)$ is a $(1,3)$-separation of order $1$:
    Let $\{u\}:=V(G_1\cap G_2)$. Let $a$ be the nominated vertex in
    $G_1-G_2$. If $G_2$ contains an $\{u,b,c,d\}$-minor, then adding
    $G_1$ to the branch set that contains $u$ gives an
    $\{a,b,c,d\}$-minor in $G$, and we are done. Otherwise, by the
    choice of $G$, $G_2$ is a spanning subgraph of an obstruction
    $H^+$, where $u,b,c,d$ are nominated in $G_2$.
    
    If $u$ is type-1, then $u$ is in the outerface of $H$ (as embedded
    in Figure~\ref{fig:obstructions}).  Let $x$ and $y$ be the two
    neighbours of $u$ in this outerface.  Add $a$ into the outerface
    of $H$, adjacent to $x$, $u$ and $y$.  Thus $axu$ and $auy$ become
    internal faces of $H$. Let $X_{axu}:=V(G_1)\setminus \{a,u\}$. The
    resulting graph $H^+$ contains $G$ as a spanning subgraph, and is
    in the same class as the original $H^+$.

    If $u$ is type-2, then $H^+$ is in class $\mathcal{A}$. Let $x$ be
    the degree-4 neighbour of $u$ in $H$. Add $a$ to $H$ adjacent to
    $u$ and $x$, thus creating the triangle $axu$. Let
    $X_{axu}:=V(G_1)\setminus \{a,u\}$. The resulting graph $H^+$
    (with $a$ nominated) is in class $\mathcal{B}$, and contains $G$
    as a spanning subgraph.

    If $u$ is type-3, then $u$ is in a unique triangle $uxy$ in $H$.
    In $H$, delete $u$, add $a$ adjacent to $x$ and $y$, thus creating
    the triangle $axy$. Let $X_{axy}:= V(X_{uxy}) \cup
    V(G_1)\setminus\{a\}$. The resulting graph $H^+$ (with $a$
    nominated) is in the same class as the original $H^+$, and
    contains $G$ as a spanning subgraph.

  \item Suppose that $(G_1,G_2)$ is a $(2,2)$-separation of order $1$:
    Let $\{u\}:=V(G_1\cap G_2)$.  Without loss of generality, $a,b\in
    V(G_1)$ and $c,d\in V(G_2)$. Let $H$ be the planar graph with
    outerface $(a,b,c,d)$, and one internal vertex $u$ adjacent to
    $a,b,c,d$.  Let $X_{abu}:=V(G_1)\setminus\{a,b,u\}$ and
    $X_{cdu}:=V(G_2)\setminus\{c,d,u\}$. The resulting graph $H^+$ is
    in class $\mathcal{D}$, and contains $G$ as a spanning subgraph.

  \item Suppose that $(G_1,G_2)$ is a $(1,4)$-separation of order $1$:
    Without loss of generality, $a\in V(G_1)$ and $a,b,c,d\in
    V(G_2)$. If $G_2$ contains an $\{a,b,c,d\}$-minor then so does
    $G$. Otherwise, by the choice of $G$, $G_2$ is a spanning subgraph
    of an obstruction $H^+$. Now, $a$ is in some triangle $T$ of
    $H$. Add $V(G_1)\setminus\{a\}$ to $X_T$. The resulting graph
    $H^+$ is in the same class as the original $H^+$, and contains $G$
    as a spanning subgraph.

  \item Suppose that $(G_1,G_2)$ is a $(2,3)$-separation of order $1$:
    Without loss of generality, $a,b\in V(G_1)$ and $b,c,d\in
    V(G_2)$. Let $H:=K_4-ad$ where $V(H):=\{a,b,c,d\}$. Let
    $X_{abc}:=V(G_1)\setminus\{a,b\}$ and
    $X_{bcd}:=V(G_2)\setminus\{b,c,d\}$. The resulting graph $H^+$ is
    in class $\mathcal{D}$, and contains $G$ as a spanning subgraph.
  \end{itemize}

  Now assume that $G$ is 2-connected.

  \begin{itemize}
  \item Suppose there is a $(0,4)$-separation $(G_1,G_2)$ of order 2,
    or a $(1,4)$-separation $(G_1,G_2)$ of order 2, or a
    $(2,4)$-separation $(G_1,G_2)$ of order 2: Let $\{u,v\}:=V(G_1\cap
    G_2)$. Let $G'$ be the graph obtained by contracting $G_1$ onto
    the edge $uv$. (This is possible since $G$ is 2-connected.)\ If
    $G'$ contains an $\{a,b,c,d\}$-minor then so does $G$, and we are
    done. Otherwise, by the choice of $G$, $G'$ is a spanning subgraph
    of an obstruction $H^+$. Since $uv$ is an edge of $G'$, we have
    $u,v\in T\cup X_T$ for some triangle $T$ of $H$. Add
    $V(G_1)\setminus\{u,v\}$ to $X_T$. The resulting graph $H^+$
    contains $G$ as a spanning subgraph, and is in the same class as
    the original $H^+$.

  \item Suppose there is a $(2,3)$-separation $(G_1,G_2)$ of order 2:
    Without loss of generality, $a$ is the nominated vertex in
    $G_1-G_2$, $\{u,b\}=V(G_1\cap G_2)$, and $c$ and $d$ are the
    nominated vertices in $G_2-G_1$.  Let $G'$ be the graph obtained
    by contracting $G_1$ onto the edge $ub$, and nominating
    $u,b,c,d$. (This is possible since $G$ is 2-connected.)\

    If $G'$ contains a $\{u,b,c,d\}$-minor, then adding $G_1-b$ to the
    branch set containing $u$ gives an $\{a,b,c,d\}$-minor in $G$, and
    we are done. Otherwise, by the choice of $G$, $G'$ is a spanning
    subgraph of some obstruction $H^+$.  Since $ub$ is an edge of $G'$
    and both $u$ and $b$ are nominated in $G'$, $H^+$ is in class
    $\mathcal{A}$, $\mathcal{D}$ or $\mathcal{E}$.
    
    If $u$ is type-1, then $ub$ is in the outerface of $H$ (as
    embedded in Figure~\ref{fig:obstructions}).  Let $x$ be the
    neighbour of $u$ distinct from $b$ in this outerface. Add $a$ into
    the outerface of $H$ adjacent to $u,b,x$, and let
    $X_{a,u,b}:=V(G_1)\setminus\{a,b,u\}$. The resulting graph $H^+$
    is in the same class as the original $H^+$, and contains $G$ as a
    spanning subgraph.

    If $u$ is type-2, then $H^+\in \mathcal{A}$.  Add $a$ to $H$
    adjacent to $u$ and $b$, thus creating the triangle $aub$. Let
    $X_{aub}:=V(G_1)\setminus\{a,u,b\}$. The resulting graph $H^+$ is
    in class $\mathcal{E}$, and contains $G$ as a spanning subgraph.
    
    Now assume that $u$ is type-3. Thus $ub$ is in one triangle $ubx$
    in $H$ (since both $u$ and $b$ are nominated in $G'$).  In $H$,
    delete $u$, add $a$ adjacent to $x$ and $b$ creating the triangle
    $axb$, and let $X_{axb}:= V(X_{ubx}) \cup
    V(G_1)\setminus\{a,b\}$. The resulting graph $H^+$ contains $G$ as
    a spanning subgraph and is in the same class as the original
    $H^+$.

  \item Suppose there is a $(3,3)$-separation $(G_1,G_2)$ of order 2:
    Without loss of generality, $a\in V(G_1-G_2)$, $\{b,c\}=V(G_1\cap
    G_2)$, and $d\in V(G_2-G_1)$. Let $H:=K_4-ad$ where
    $V(H):=\{a,b,c,d\}$. Let $X_{abc}:=V(G_1)\setminus\{a,b,c\}$ and
    $X_{bcd}:=V(G_2)\setminus\{b,c,d\}$. The resulting graph $H^+$ is
    in class $\mathcal{D}$, and contains $G$ as a spanning subgraph.

  \item Suppose there is a $(2,2)$-separation $(G_1,G_2)$ of order 2:
    Let $\{u,v\}:=V(G_1\cap G_2)$. Let $G_i'$ be the graph obtained
    from $G_i$ by adding the edge $uv$. Since $G$ is 2-connected, by
    Lemma~\ref{lem:22}, if $G'_1$ contains an $\{a,b,u,v\}$-minor or
    $G'_2$ contains a $\{u,v,c,d\}$-minor, then $G$ contains an
    $\{a,b,c,d\}$-minor, and we are done. Otherwise, by the choice of
    $G$, each $G'_i$ is a spanning subgraph of an obstruction $H_i^+$.
    Since the nominated vertices $u$ and $v$ are adjacent in $G'_1$
    and $G'_2$, $H_1^+$ and $H_2^+$ are class $\mathcal{A}$,
    $\mathcal{D}$ or $\mathcal{E}$.

    Consider the case in which $H_1^+\in\mathcal{D}$. Then the edge
    $uv$ is either on the outerface of $H_1$ or is a diagonal of
    $H_1$. If $uv$ is a diagonal of $H_1$ then $H_1\cong K_4-ab$ since
    every triangle of $H_1$ is a face of $H_1$. Similarly, if
    $H_2^+\in\mathcal{D}$ and $uv$ is a diagonal of $H_2$, then
    $H_2\cong K_4-cd$.

    % If $uv$ is a diagonal of $H_1$ then define a graph $L_1^+$ with
    % $V(L_1^+)=V(H_1^+)$ as follows. Let $L_1:=K_4-ab$ where
    % $V(L_1):=\{a,b,u,v\}$. Thus $uva$ and $uvb$ are the triangles of
    % $L_1$.  Form the graph $L_1^+$ by defining the corresponding
    % cliques as follows. Let $X_{uva}$ be the union of the $T\cup
    % X_T$ where $T$ is a triangle of $H_1$ inside $uva$, and let
    % $X_{uvb}$ be the union of the $T\cup X_T$ where $T$ is a
    % triangle of $H_1$ inside $uvb$. Delete $u,v,a$ from $X_{uva}$,
    % and delete $u,v,b$ from $X_{uvb}$. If $H_2^+\in\mathcal{D}$ and
    % $uv$ is a diagonal of $H_2$, then define $L_2^+$ analogously,
    % where $c,d$ take the place of $a,b$.

    Let $H^+$ be the graph obtained by identifying $u,v$ in $H_1^+$
    with $u,v$ in $H_2^+$. Thus $H^+$ contains $G$ as a spanning
    subgraph.  By adding gray edges to $H^+$ as illustrated in
    Figure~\ref{fig:22}, we now show that $H^+$ is an
    obstruction. Consider the following cases:
    \begin{itemize}
    \item If $H_1^+\in \mathcal{A}$ and $H_2^+\in \mathcal{A}$ then
      $H^+\in \mathcal{C}$.

    \item Say $H_1^+\in \mathcal{A}$ and $H_2^+\in \mathcal{D}$.  If
      $uv$ is on the outerface of $H_2$ then $H^+\in
      \mathcal{E}$. Otherwise, $uv$ is a diagonal of $H_2$, and
      $H^+\in \mathcal{C}$.

    \item If $H_1^+\in \mathcal{A}$ and $H_2^+\in \mathcal{E}$ then
      $H^+\in \mathcal{F}$.

    \item Say $H_1^+\in \mathcal{D}$ and $H_2^+\in \mathcal{D}$.  If
      $uv$ is on the outerface of $H_1$ and $uv$ is on the outerface
      of $H_2$ then $H^+\in\mathcal{D}$.  If $uv$ is a diagonal of
      $H_1$ and $uv$ is on the outerface of $H_2$ then
      $H^+\in\mathcal{E}$.  Otherwise, $uv$ is a diagonal of $H_1$ and
      $uv$ is a diagonal of $H_2$, and $H^+\in\mathcal{B}$.

    \item Say $H_1^+\in \mathcal{E}$ and $H_2^+\in \mathcal{D}$.  If
      $uv$ is on the outerface of $H_2$ then $H^+\in
      \mathcal{E}$. Otherwise, $uv$ is a diagonal of $H_2$, and
      $H^+\in \mathcal{F}$.

    \item If $H_1^+\in \mathcal{E}$ and $H_2^+\in \mathcal{E}$ then
      $H^+\in \mathcal{F}$.

    \end{itemize}

    \begin{figure}
      \begin{center}
        \includegraphics[width=0.95\textwidth]{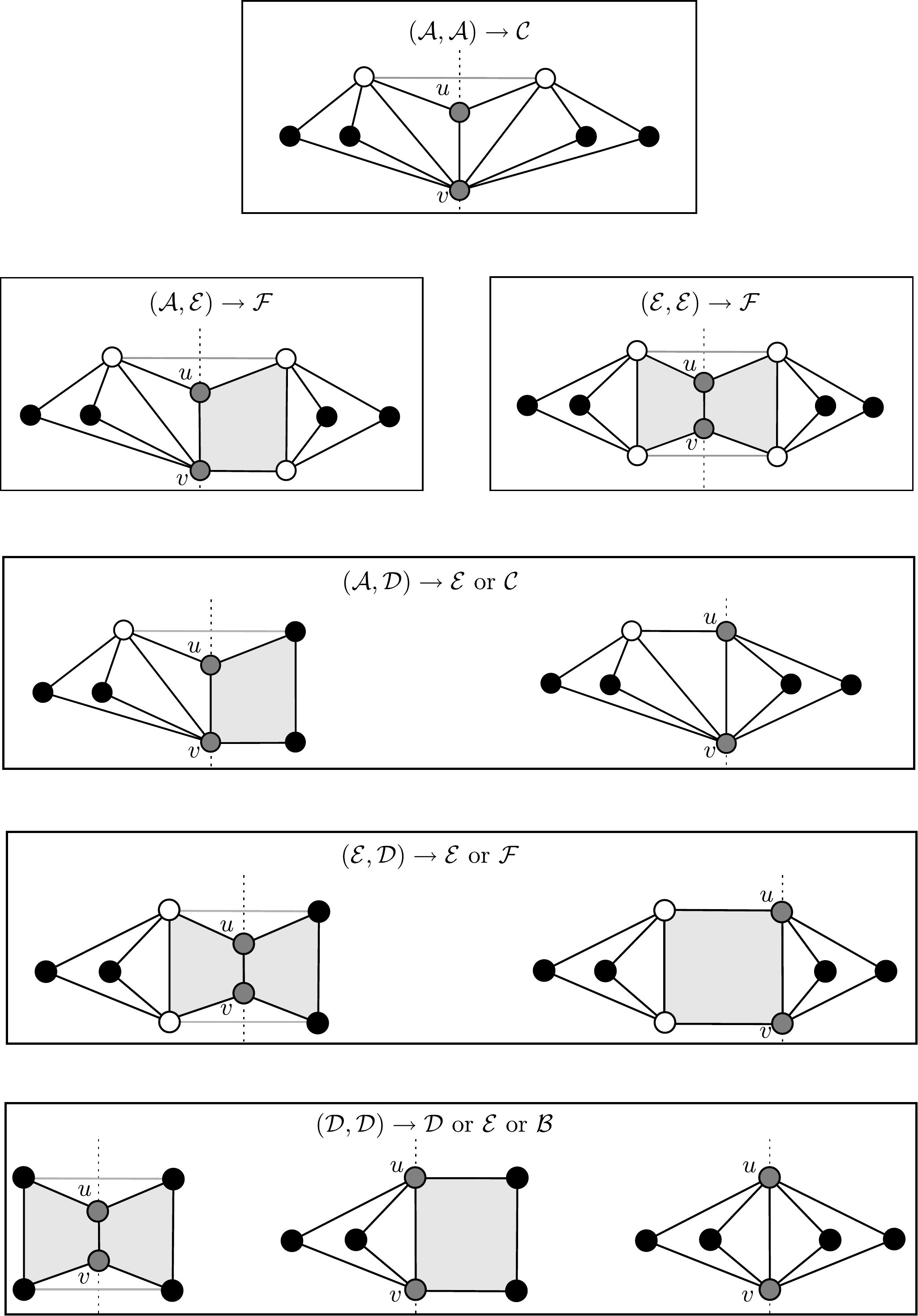}
      \end{center}
      \caption{\label{fig:22} Constructions of new obstructions in the
        case of a $(2,2)$-separation. Black vertices are
        nominated. Gray vertices are the cut-pair. White vertices are
        not nominated. Gray edges are inserted. Gray regions are
        webs.}
    \end{figure}

  \end{itemize}

  Now assume that $G$ is 2-connected and every separation of order 2
  is a $(1,3)$-separation. Before addressing this case it will be
  convenient to first eliminate a particular separation of order 3.

  \begin{itemize}

  \item Suppose there is a separation $(G_1,G_2)$ of order 3 with no
    nominated vertices in $G_2-G_1$, such that $|V(G_2)|\geq 5$:

    Let $\{u,v,w\}:=V(G_1\cap G_2)$. We claim that $G_2$ contains a
    $\{u,v,w\}$-minor. If not, then by Lemma~\ref{lem:K3}, there is a
    vertex $x$ such that at most one of $u,v,w$ is in each component
    of $G_2-x$. Since $|V(G_2)|\geq 5$ there is a vertex $y\in
    V(G_2)\setminus\{u,v,w,x\}$. If $y$ is in the same component of
    $G_2-x$ as $u$, then $\{u,x\}$ is a cut-pair that forms a
    $(0,4)$-separation of order 2 in $G$. Thus $y$ is not in the same
    component of $G_2-x$ as $u$.  Similarly, $y$ is not in the same
    component of $G_2-x$ as $v$ or $w$.  Thus $x$ is a cut-vertex,
    which is a contradiction.  Hence $G_2$ contains a
    $\{u,v,w\}$-minor.  Let $G'$ be the graph obtained from $G_1$ by
    adding the triangle $uvw$. Thus $G'$ is a minor of $G$, and
    $|V(G')|<|V(G)|$. If $G'$ contains an $\{a,b,c,d\}$-minor then so
    does $G$ and we are done.  Otherwise, by the choice of $G$, $G'$
    is a spanning subgraph of an obstruction $H^+$. The triangle $uvw$
    is contained in $T\cup X_T$ for some triangle $T$ of $H$.  Add
    $V(G_2)\setminus\{u,v,w\}$ to $X_T$. The resulting graph $H^+$
    contains $G$ as a spanning subgraph (since the neighbours of each
    vertex in $G_2\setminus\{u,v,w\}$ are in $G_2$) and is of the same
    class as the original $H^+$.

  \end{itemize}

  Now assume that if $(G_1,G_2)$ is a separation of order 3 with no
  nominated vertices in $G_2-G_1$, then $|V(G_2)|=4$.  We consider the
  following two types of $(1,3)$-separations.

  \begin{itemize}

  \item Suppose there is a $(1,3)$-separation $(G_1,G_2)$ of order 2,
    such that $|V(G_1)|\geq 4$, or $|V(G_1)|=3$ and $G_1\not\cong
    K_3$:

    Let $a$ be the nominated vertex in $G_1-G_2$.  Let
    $\{u,v\}:=V(G_1\cap G_2)$. Let $G'$ be the graph obtained from
    $G_2$ by adding the edge $uv$ if it does not already exist, and by
    adding a new vertex $a'$ adjacent to $u$ and $v$, where $a',b,c,d$
    are nominated in $G'$. Observe that $|V(G')|<|V(G)|$ or if
    $|V(G')|=|V(G)|$ then $|E(G')|>|E(G)|$. Thus by the choice of $G$,
    $G'$ contains an $\{a',b,c,d\}$-minor, or $G'$ is a spanning
    subgraph of an obstruction $H^+$.

    First suppose that $G'$ contains a $K_4$-minor $A',B,C,D$
    respectively rooted at $a',b,c,d$.  Since $a'$ has degree $2$ in
    $G'$, without loss of generality, $u$ is in $A'$. Now $G_1-v$ is
    connected, as otherwise $v$ is a cut-vertex in $G$. Thus
    $A:=(G_1-v)\cup A'$ is connected and is disjoint from $B\cup C\cup
    D$.  We claim that $A,B,C,D$ is an $\{a,b,c,d\}$-minor in $G$.
    Clearly $A,B,C,D$ respectively contain $a,b,c,d$. Since the edge
    $uv$ was added to $G'$, it may be that $G'$ is not a minor of
    $G$. So this claim is not immediate. However, if $uv$ is in $G$
    then $G'$ is a minor of $G$, and $A,B,C,D$ is a $K_4$-minor in
    $G$, and we are done.  It remains to show that the edge $uv$ is
    not needed for $A,B,C,D$ to be a $K_4$-minor. Since $u$ is in $A$,
    and $A$ is connected, the only problem is if $uv$ is the only edge
    between $A$ and some other branch set, say $B$. But, since $G$ is
    2-connected, $v$ has a neighbour in $G_1-u-v$, which is a subgraph
    of $A$. This proves that $A,B,C,D$ is an $\{a,b,c,d\}$-minor in
    $G$.

    Now assume that $G'$ is a spanning subgraph of some obstruction
    $H^+$. Thus $a',u,v\in T\cup X_T$ for some triangle $T$ of $H$,
    and $a'\in T$. Rename $a'$ as $a$ in $H$, and add
    $V(G_1)\setminus\{a,u,v\}$ to $X_T$. The resulting graph $H^+$ is
    in the same class as the original $H^+$ and contains $G$ as a
    spanning subgraph.

  \end{itemize}

  Now assume that if $(G_1,G_2)$ is a separation of order 2, then
  $|V(G_1)|=3$, the vertex in $G_1-G_2$ is nominated, and $G_1\cong
  K_3$ (since $G$ is 2-connected).

  \begin{itemize}

  \item Suppose there is a $(1,3)$-separation $(G_1,G_2)$ of order 2:
    Let $a$ be the nominated vertex in $G_1-G_2$. Let
    $\{u,v\}:=V(G_1\cap G_2)$. Thus $G_1\cong K_3$ with vertex set
    $\{a,u,v\}$.

    Let $G_u$ be the graph obtained from $G$ by contracting the edge
    $au$ into $u$, and nominating $u$.  Let $G_v$ be the graph
    obtained from $G$ by contracting the edge $av$ into $v$, and
    nominating $v$.  Each of $G_u$ and $G_v$ have four nominated
    vertices. Since $a$ has degree 2 in $G$, $G$ contains an
    $\{a,b,c,d\}$-minor if and only if $G_u$ contains a
    $\{u,b,c,d\}$-minor or $G_v$ contains a $\{v,b,c,d\}$-minor.  Also
    observe that $G_u\cong G_v$; they only differ in one nominated
    vertex.  For the time being, concentrate on $G_u$; we will return
    to $G_v$ later.

    If $G_u$ contains a $\{u,b,c,d\}$-minor, then $G$ contains an
    $\{a,b,c,d\}$-minor, and we are done. Otherwise, by the choice of
    $G$, $G_u$ is a spanning subgraph of an obstruction $H^+$. Since a
    class $\mathcal{A}$ obstruction has a $(2,3)$-separation, and a
    class $\mathcal{B},\mathcal{C},\mathcal{E}$ or $\mathcal{F}$
    obstruction has a $(2,2)$-separation, $H^+$ is in class
    $\mathcal{D}$.

    If $|X_T|\geq2$ for some triangle $T$ of $H$ then $(G-X_T,T\cup
    X_T)$ is a separation of order 3 with no nominated vertices in
    $X_T$, such that $|V(T\cup X_T)|\geq 5$, which is a
    contradiction. Thus $|X_T|\leq 1$. If $X_T=\{w\}$ then move $w$
    out of $X_T$ into $H$; the resulting graph $H^+$ is in
    $\mathcal{D}$ and contains $G_u$ as a spanning subgraph. Repeat
    this step until $X_T=\emptyset$ for each triangle $T$ of $H$. Thus
    $G_u$ is a spanning subgraph of $H$ (not $H^+$), and $G_u$ is
    planar. Since $G_u$ was obtained from $G$ by deleting a degree-2
    vertex whose neighbours are adjacent, $G$ is also planar.

    % Suppose that $u$ is type-1 in $H$. Let $T=uxy$ be the triangle
    % in $H$ containing $u$. Then in $H$, replace $T$ by $axy$ and
    % add $u$ to $T$. The obstruction obtained (which is of the same
    % class as $H$) contains $G$ as a spanning subgraph, as desired.

    % Now suppose that $u$ is type-3 in $H$. Let $x$ be the
    % non-nominated vertex in $H$. Then $\{u,x\}$ is a cut-pair that
    % forms a $(2,2)$-separation in $G$, which is a contradiction.

    Since $H\in \mathcal{D}$, $u$ is type-1. Let $S$ be the set of
    degree-2 nominated vertices in $G$. Thus $a\in
    S\subseteq\{a,b,c,d\}$. Observe that $G$ is almost 3-connected in
    the sense that the only cut-pairs are the neighbours of vertices
    in $S$, and in this case the cut-pair are adjacent. As illustrated
    in Figure~\ref{fig:onethree}, let $G^*:=G-S$. A separation in
    $G^*$ is a separation in $G$. Thus $G^*$ is 3-connected and
    planar. Hence $G^*$ has a unique planar embedding.  Moreover,
    every planar embedding of $G$ is obtained from the unique planar
    embedding of $G^*$ by drawing each vertex $x\in S$ in one of the
    two faces that contain the edge between the two neighbours of $x$.
    In the planar embedding of $G_u$ induced by the planar embedding
    of $H$, the nominated vertices $u,b,c,d$ are on the
    outerface. Moreover, the unique planar embedding of $G^*$ is
    obtained from this embedding of $G_u$ by deleting
    $S\setminus\{a\}$.

    \begin{figure}
      \begin{center}
        \includegraphics[width=0.8\textwidth]{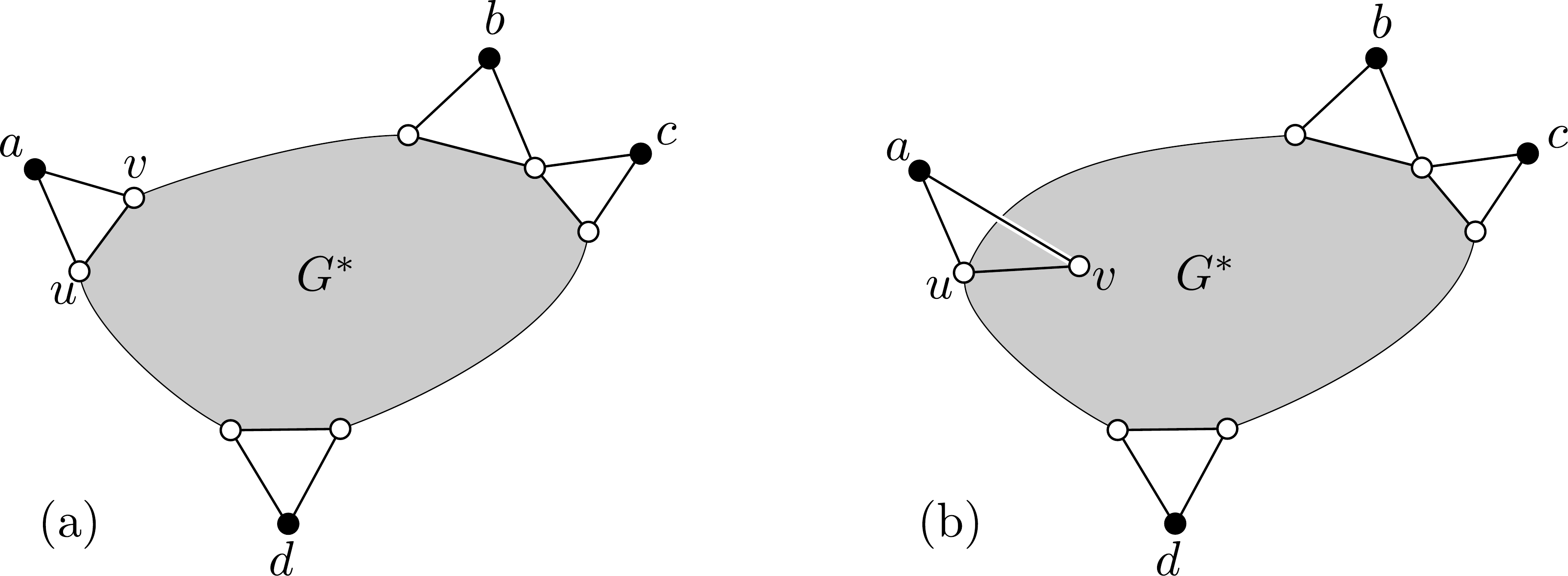}
      \end{center}
      \caption{\label{fig:onethree} Illustration of $G$ with a
        $(1,3)$-separation of order 2. Vertex $a$ has degree 2, and
        $b,c,d$ might have degree 2.}
    \end{figure}

    If the edge $uv$ is on the outerface of $G_u$ (as in
    Figure~\ref{fig:onethree}(a)), then draw $a$ in the outerface of
    $G_u$ adjacent to $u$ and $v$, and possibly add edges between $a$
    and other nominated vertices to obtain an obstruction (in the same
    class as $H$) that contains $G$ as a spanning subgraph.

    Now assume that $uv$ is not on the outerface of $G_u$ (as in
    Figure~\ref{fig:onethree}(b)). Recall that $G_u\cong G_v$, and
    $v,b,c,d$ are nominated in $G_v$.  Consider this embedding of
    $G_u$ to be an embedding of $G_v$. The outerface of $G_v$ contains
    $b,c,d$ but not $v$.

    For $x\in\{b,c,d\}$, if $x\in S$ then choose a neighbour $x'$ of
    $x$, otherwise let $x':=x$.  If $x$ and $y$ are distinct vertices
    in $S$, then $N_G(x)\neq N_G(y)$, as otherwise $G$ would contains
    a $(2,2)$-separation of order $2$.  Thus we may choose $b',c',d'$
    so that they are distinct.  Each of $b',c',d'$ are on the
    outerface of $G_v$. So $v,b',c',d'$ are all distinct.

    Consider $v,b',c',d'$ to be nominated vertices in $G^*$.  Consider
    the embedding of $G^*$ formed from $H$.  Then $b',c',d'$ are on
    the outerface of $G^*$, but $v$ is not.  In a 3-connected planar
    graph, three vertices all appear on at most one face.  Thus, no
    face of $G^*$ contains all of $v,b',c',d'$. Thus by
    Theorem~\ref{thm:3ConPlanar}, $G^*$ contains a
    $\{v,b'c',d'\}$-minor. Given that $G^*$ can be obtained from $G$
    by contracting $av$, $bb'$, $cc'$ and $dd'$, $G$ contains an
    $\{a,b,c,d\}$-minor. (Here, if $b=b'$ then contracting $bb'$ does
    nothing.)\
  \end{itemize}

  Now assume that $G$ is 3-connected. The result follows from Theorem
  \ref{thm:3con}, since a web is in class $\mathcal{D}$.
\end{proof}

%%%%%%%%%%%%%%%%%%%%%%%%%%%
\section{Algorithmics}

\citet{RS-GraphMinorsXIII-JCTB95} presented a $O(n^3)$ time algorithm
that (for fixed $t$) tests whether a given $n$-vertex graph contains a
$K_t$-minor rooted at $t$ nominated vertices. We conjecture that for
$t=4$ there is a $O(n)$ time algorithm for this problem; see
\citep{KLR,LR,Hagerup,Woeginger} for related linear time algorithms.

%%%%%%%%%%%%%%%%%%%%%%%%%%%%%%%%%%%%%%%%%%%%%%%%%%%%%%%%%%%%%%%%%%%%%%
%\bibliographystyle{myNatbibStyle}
%\bibliography{rooted_minors}
%%%%%%%%%%%%%%%%%%%%%%%%%%%%%%%%%%%%%%%%%%%%%%%%%%%%%%%%%%%%%%%%%%%%%%

\end{document}